\documentclass[12pt,a4paper,reqno]{amsart}
\usepackage{latexsym,amsmath,amssymb,amsthm}

\usepackage{color}
\theoremstyle{plain} 
 
\newtheorem{theorem}{Theorem}[section]

\newtheorem{lemma}[theorem]{Lemma}
\theoremstyle{definition}

\newtheorem{remark}[theorem]{Remark}

\def\r{\mathbb{R}}

\title[Minimum principles for a class of elliptic problems]{Some minimum principles for a class of nonlinear elliptic problems in divergence form}
 
\author{Cristian Enache}
\address{Department of Mathematics and Statistics, American University of Sharjah, University City Road, Sharjah, 26666 U.A.E.}
\email{cenache@aus.edu}

\author[Rafael L\'opez]{Rafael L\'opez}
\address{Departamento de Geometr\'{\i}a y Topolog\'{\i}a\\
Universidad de Granada 18071 Granada, Spain}
\email{rcamino@ugr.es}

\keywords{mean curvature equation, constant rank theorem, convexity, maximum principles, a priori estimates}

\subjclass{35B38, 35J60, 35J93, 53A10}

\begin{document}

\begin{abstract} 
In this paper we study a general class of nonlinear elliptic problems in divergence form. First, we prove that the solutions to these problems satisfy a convexity property when the given domain is strictly convex. Then, making use of this convexity property, we develop some minimum principles for an appropriate $P$-function, in the sense of L.~E.~Payne. Finally, this new minimum principle is applied to find a priori estimates for the solutions, in terms of the mean curvature of the boundary of the underlying domain. 
\end{abstract}

\maketitle

\section{Introduction and motivation}

In this paper we develop minimum principles and a priori estimates for solutions of a class of quasilinear elliptic equations in divergence form. Before stating our main results, we briefly describe the geometric motivation behind these equations.

\subsection{Geometric background}

In the seminal paper on quasilinear elliptic equations \cite{Se69}, Serrin considered the Dirichlet problem for a large class of second order quasilinear elliptic equations. The most illustrative example corresponds to  
\begin{equation}\label{eq:Serrin-original}
\left((1+|\nabla u|^2)I-\nabla u \nabla u\right)\nabla^2u =nH(1+|\nabla u|^2)^\frac{\theta}{2}, 
\end{equation}
where $H$ and $\theta$ are constants (see \cite[p.~477]{Se69}). Here $u=u(x)$ is a function defined in a domain of $n$-dimensional Euclidean space $\r^n$, while $\nabla u$ and $\nabla^2u$ denote the gradient and the Hessian matrix of $u$, respectively. The most important case is $\theta=3$, where the equation describes hypersurfaces $x_{n+1}=u(x)$, $x\in\Omega\subset\r^n$, with constant mean curvature $H$. If $\theta<3$, the equation satisfies a maximum principle. Serrin made a deep study of the existence for the Dirichlet problem when $\theta\leq 3$ (see \cite[p.~478]{Se69}). 

For our convenience, we rewrite \eqref{eq:Serrin-original} in divergence form as
\begin{equation}\label{eq:1.1}
\mathrm{div}\left( \frac{\nabla u}{\sqrt{1+|\nabla u|^{2}}}\right) =nH\left(1+|\nabla u|^{2}\right)^{\frac{\theta-3}{2}}.
\end{equation}
We note that equation \eqref{eq:1.1} with arbitrary $\theta$ also appears in the study of translating solitons to the power mean curvature flow \cite{an,sh,sh2}. The theory extends naturally to the Lorentz-Minkowski space $\mathbb{M}^{n+1}$ by considering spacelike hypersurfaces, where $1+|\nabla u|^{2}$ is replaced by $1-|\nabla u|^{2}$ under the spacelike condition $|\nabla u|^2<1$ (see \cite{ec1,ec3,basi,ger}). Equations of this type also arise in various geometric settings, which provides additional motivation; however, the focus of this paper is on analytic properties of solutions.

\subsection{The equations under study}

In this paper we focus on the case $\theta=0$ in equation \eqref{eq:1.1}. With this choice, the exponent on the right-hand side becomes $
(\theta-3)/2 = -3/2$. Taking $nH=1$, we obtain the \textbf{Euclidean case}:
\begin{equation}\label{eq:1.2}
\mathrm{div}\left( \frac{\nabla u}{\sqrt{1+|\nabla u|^{2}}}\right) = \frac{1}{(1+|\nabla u|^{2})^{3/2}},
\end{equation}
and the \textbf{Lorentzian case} in $\mathbb{M}^{n+1}$:
\begin{equation}\label{eq:1.3}
\mathrm{div}\left( \frac{\nabla u}{\sqrt{1-|\nabla u|^{2}}}\right) =\frac{1}{(1-|\nabla u|^{2})^{3/2}}.
\end{equation}

\begin{remark}
Our main results (Theorems~\ref{t1}, \ref{t2}, and \ref{t3}) concern the analytic properties of solutions (namely, convexity, minimum principles, and a priori estimates) and are established independently of any geometric interpretation. The geometric connection serves only as motivation for studying these particular equations.
\end{remark}

\subsection{The general problem}

The Dirichlet problems for \eqref{eq:1.2} and \eqref{eq:1.3} are particular cases of the more general nonlinear elliptic problem in divergence form:
\begin{align}
\mathrm{div}\left( g\left( \left\vert \nabla u\right\vert ^{2}\right) \nabla u\right) &= f(u)\, G\left( \left\vert \nabla u\right\vert ^{2}\right) \quad \text{in }\Omega, \label{eq:1.4} \\
u &= 0 \quad \text{on }\partial \Omega, \label{eq:1.5}
\end{align}
where, throughout this paper, $\Omega \subset \mathbb{R}^{n}$, $n\geq 2$, is a smooth bounded strictly convex domain, and $f,g\in C^{2}(\Omega)$ are positive functions satisfying
\[
G(s) := g(s) + 2s\,g^{\prime}(s) > 0 \quad \text{for } s > 0.
\]
As particular cases, setting $f(s) = 1$ and $g(s) = (1+s)^{-1/2}$ in \eqref{eq:1.4} yields equation \eqref{eq:1.2}, while setting $f(s) = 1$ and $g(s) = (1-s)^{-1/2}$ in \eqref{eq:1.4} yields equation \eqref{eq:1.3}.

\subsection{Main results}

Our first main result establishes a convexity property for an appropriate transformation of the solution. To state it, we introduce the following definitions. For $y < 0$, let
\begin{equation}\label{eq:1.6}
F(y) := \int_{y}^{0} f(s)\, ds.
\end{equation}
Since $f > 0$ and we integrate from $y < 0$ to $0$, we have $F(y) > 0$ for all $y < 0$. For $u < 0$, we then define
\begin{equation}\label{eq:1.6b}
v = v(u) := \int_{u}^{0} \frac{dy}{\sqrt{F(y)}},
\end{equation}
where $u$ is the solution of problem \eqref{eq:1.4}--\eqref{eq:1.5}. Note that the integrand in \eqref{eq:1.6b} is well-defined and positive since $F(y) > 0$ for $y < 0$.

\begin{theorem}\label{t1}
Assume that $u$ is the solution of problem \eqref{eq:1.4}--\eqref{eq:1.5} and that $f$ satisfies the following properties:
\begin{equation}\label{eq:1.7}
f^{\prime} > 0, \qquad 2(f^{\prime})^{2} - f\,f^{\prime\prime} \geq 0.
\end{equation}
Then $v = v(u)$, defined in \eqref{eq:1.6}--\eqref{eq:1.6b}, is strictly concave in $\Omega$.
\end{theorem}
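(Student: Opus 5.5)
We prove strict concavity of $v$ with the constant rank / continuity method of Korevaar--Kennington and Caffarelli--Friedman, in the refined form of Korevaar--Lewis and Bian--Guan.

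\textbf{Step 1: the equation for $v$.} Since $f>0$, the right-hand side of \eqref{eq:1.4} is positive. The maximum principle then gives $u<0$ in $\Omega$, so $v$ is well defined, $v>0$ in $\Omega$ and $v=0$ on $\partial\Omega$. Write $u=\psi(v)$, where $\psi$ is the inverse of \eqref{eq:1.6b}; thus $\psi'=-\sqrt{F(\psi)}$ and $\psi''=\tfrac12 f(\psi)$. Substituting into \eqref{eq:1.4} yields a quasilinear elliptic equation
\[
a^{ij}(\nabla v,v)\,v_{ij}=B(v,\nabla v),
\]
where $a^{ij}=g\,\delta_{ij}+2g'\,\psi'^2 v_iv_j$ is elliptic because $G>0$. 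The lower-order term $B$ collects the terms coming from $\psi''|\nabla v|^2$ and from $f(\psi)G$, divided by $\psi'$. The normalization $\psi''=f/2$ is what makes the $F$-dependence homogeneous: near $\partial\Omega$ one has $F(u)\approx f(0)|u|$, so $v\approx 2|u|^{1/2}/\sqrt{f(0)}$ behaves like a square root of the distance. Hence $v$ has infinite gradient at the boundary with the right blow-up profile.

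\textbf{Step 2: boundary behaviour.} We show that $v$ is strictly concave in a neighbourhood of $\partial\Omega$. Because $\Omega$ is strictly convex and smooth, Hopf's lemma gives $|\nabla u|>0$ on $\partial\Omega$. Expanding $u$ near the boundary shows that $D^2v$ is dominated by the $-\tfrac14|u|^{-3/2}|\nabla u|^2$ contribution in the normal direction. In the tangential directions the sign comes from the curvature of $\partial\Omega$, exactly as in the computations for $\sqrt{-u}$ in the torsion case.

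\textbf{Step 3: deformation and constant rank.} Embed the problem in a one-parameter family. One option is to deform $\Omega$ to a ball through strictly convex domains $\Omega_t$. The other is to deform $f,g$ to a case where concavity is known, such as $g\equiv1$, $f\equiv1$ on the ball, where $u$ is quadratic and $v$ is a multiple of $\sqrt{R^2-|x|^2}$. Both deformations preserve \eqref{eq:1.7} and $G>0$. The set of parameters for which $v_t$ is strictly concave is open by continuity together with Step 2, and nonempty. For closedness we need a constant rank theorem: if $D^2v\le0$ then $D^2v$ has constant rank in $\Omega$. Combined with Step 2, constant rank forces full rank, and hence strict concavity.

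The constant rank theorem is the main obstacle. Following Bian--Guan, it suffices to check that the right-hand side, viewed as a function $\tilde B(v,\nabla v)$, has $1/\tilde B$ (after normalizing by the coefficient structure) concave in $v$ in a suitable sense. More concretely, take the minimal rank $l$ attained at $x_0$ and set $\phi=\sigma_{l+1}(-D^2v)+\sigma_{l+2}/\sigma_{l+1}$. Near $x_0$ one derives $a^{ij}\phi_{ij}\le C(\phi+|\nabla\phi|)$. The only bad term is a multiple of $\partial_v^2$ of the zeroth-order coefficient times the null eigenvalues, and it must have the right sign. Computing it with $\psi'=-\sqrt F$ and $\psi''=f/2$, the sign reduces to $f'>0$ and $2f'^2-ff''\ge0$, that is, to $(1/f)''\le0$. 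This is how \eqref{eq:1.7} enters. The strong maximum principle then gives $\phi\equiv0$, so the rank is constant.

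The delicate points are the following:
\begin{itemize}
\item tracking the gradient-dependent terms from $g$, where $G>0$ must keep the mixed terms controlled by $|\nabla\phi|$;
\item verifying that the $\nabla v$-dependence of $B$ does not spoil the sign, using the structural hypothesis of Bian--Guan.
\end{itemize}
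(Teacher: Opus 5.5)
Your overall architecture is the paper's: rewrite the equation for $v$, get strict concavity near $\partial\Omega$ (Lemma~\ref{l23}), deform $\Omega_t=(1-t)B+t\Omega$, and close with a constant rank theorem. (Incidentally, $\psi''=-\tfrac12 f$, not $+\tfrac12 f$, since $F'=-f$.) There are, however, two genuine gaps.

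\textbf{No verified starting point for the continuity argument.} If you deform the domain, you need strict concavity of $v$ on a ball for the \emph{given} $f,g$, and you never prove it. The paper supplies exactly this in Lemma~\ref{lem:ball}.
\begin{itemize}
\item By radial symmetry, $u=\varphi(r)$, and the radial ODE gives $\varphi''(0)>0$.
\item At a first zero $r_0$ of $\varphi''$, the differentiated ODE gives $G\varphi'''(r_0)=(n-1)g\varphi'(r_0)/r_0^2+f'\varphi'(r_0)G>0$. This contradicts $\varphi''>0$ on $[0,r_0)$.
\item Hence $D^2u>0$, and $v=\phi(u)$ with $\phi'<0$, $\phi''<0$ is strictly concave.
\end{itemize}
Your alternative, deforming $(f,g)$ to $(1,1)$, does not preserve \eqref{eq:1.7} as you claim. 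The endpoint $f\equiv 1$ violates $f'>0$. The straight path $f_t=(1-t)+tf$ can also destroy $2f_t'^2-f_tf_t''\ge 0$: for $f=e^u$ this quantity equals $te^u(te^u-1+t)<0$ for small $t$. You would have to interpolate $1/f$ instead. You would also need solvability and $t$-uniform estimates along the path.

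\textbf{The constant rank theorem is asserted, not proved, and your account of how \eqref{eq:1.7} enters is off.}
\begin{itemize}
\item \eqref{eq:1.7} says $(1/f)''=(2f'^2-ff'')/f^3\ge 0$, i.e.\ $1/f$ is convex, not $(1/f)''\le 0$.
\item The offending term is weighted by squared gradient components in the null directions, not by the null eigenvalues (which vanish). In the paper's $n=2$ computation, in the normalization $a_{ij}v_{ij}=-1-\tfrac12|\nabla v|^2$, it is $-b^2\,\partial_v^2 a_{22}\,v_1^2$ with $e_1$ null.
\end{itemize}
More seriously, the point you defer as ``delicate'' is exactly where \eqref{eq:1.7} alone does not fix the sign. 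Since $\nabla u=h'(v)\nabla v$, the coefficients depend on $v$ also through $s=F|\nabla v|^2$. Using $g+2sg'=G$, one finds $a_{22}=\frac{\sqrt F}{f}-\frac{2g'}{G}\frac{F\sqrt F}{f}v_1^2$.
\begin{itemize}
\item \eqref{eq:1.7} gives $\partial_v^2(\sqrt F/f)=\frac{F\sqrt F}{f^3}\bigl(2f'^2-ff''+f^2f'/F\bigr)\ge 0$ and $\partial_v^2(F\sqrt F/f)>0$.
\item The second piece of $a_{22}$, however, carries the sign of $-g'/G$, plus extra terms from the $s$-dependence of $g'/G$.
\end{itemize}
The paper's own computation treats $g'/G$ as constant, which is exact for the two model operators. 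But for $g=(1-s)^{-1/2}$ one has $-2g'/G\equiv -1$, so that piece is $-v_1^2\,\partial_v^2(F\sqrt F/f)<0$. So citing Bian--Guan does not finish the argument. Their structural hypothesis must be checked for this operator, whose coefficients depend on $v$ through $s$. That check depends on $g$, not only on \eqref{eq:1.7}, and it is the missing step in your proposal.
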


The case $g \equiv 1$ in the above theorem was proved by Caffarelli and Friedman~\cite{CF85} for $n=2$, and by Korevaar and Lewis~\cite{KL87} in higher dimensions. Both works used a deformation technique where a crucial role was played by a constant rank theorem. We follow their approach here. For more details and other techniques used to prove similar convexity results, we refer the reader to the surveys of McCuan~\cite{Mc02} and Ma--Ou~\cite{MO09}, as well as the book of Kawohl~\cite{Ka06}.

Next, using the above convexity result, we develop a minimum principle for an appropriate $P$-function in the sense of Payne (see the book of Sperb~\cite{Sp81}). We define
\begin{equation}\label{eq:1.8}
\Phi(\mathbf{x},\beta) := \left\vert \nabla u\right\vert^{2} - \beta\, F(u),
\end{equation}
where $u$ is the solution of problem \eqref{eq:1.4}--\eqref{eq:1.5}.

\begin{theorem}\label{t2}
If $\beta \in (1,2)$ and the data of problem \eqref{eq:1.4}--\eqref{eq:1.5} also satisfy
\[
g\,f^{\prime}\,G + \beta\, g\,f^{2}\,G^{\prime} \leq 0,
\]
then $\Phi(\mathbf{x},\beta)$, defined in \eqref{eq:1.8}, attains its minimum value on $\partial\Omega$.
\end{theorem}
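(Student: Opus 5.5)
The plan is the classical Payne--Sperb approach for $P$-functions. First I derive a differential inequality for $\Phi=\Phi(\mathbf{x},\beta)$ away from the critical points of $u$. Then I treat the critical points separately, using Theorem~\ref{t1}.

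\textbf{Step 1: the elliptic operator.} Write \eqref{eq:1.4} in nondivergence form,
\[
g\,\Delta u+2g'\,u_iu_ju_{ij}=f\,G ,
\]
with summation over repeated indices. This is $a^{ij}u_{ij}=fG$ with
\[
a^{ij}=g\,\delta_{ij}+2g'u_iu_j .
\]
The matrix $(a^{ij})$ is positive definite because its eigenvalues are $g$ and $G$.

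\textbf{Step 2: a differential inequality for $\Phi$ where $\nabla u\neq 0$.} Compute
\[
\Phi_k=2u_ju_{jk}+\beta f u_k ,
\]
\[
a^{ij}\Phi_{ij}=2a^{ij}u_{jk}u_{ik}+2u_k\,(a^{ij}u_{ij})_k-2u_k\,a^{ij}_{,k}u_{ij}+\beta f\,a^{ij}u_{ij}+\beta f' a^{ij}u_iu_j .
\]
Next I differentiate the equation and substitute $a^{ij}u_{ij}=fG$. This gives
\[
(a^{ij}u_{ij})_k=f'Gu_k+fG'\,\partial_k|\nabla u|^2 .
\]
Wherever $\nabla u\neq 0$, every derivative of $|\nabla u|^2$ can be eliminated via
\[
2u_ju_{jk}=\Phi_k-\beta fu_k .
\]
The quadratic term $a^{ij}u_{ik}u_{jk}$ is handled in the usual way. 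I choose coordinates with $\nabla u$ parallel to the $x_1$-axis. I keep only the $u_{11}$ and $u_{1\alpha}$ contributions and use Cauchy--Schwarz on $\sum_\alpha u_{\alpha\alpha}$, extracting $u_{11}$ and $u_{1\alpha}$ from $\nabla\Phi$.

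The aim is an inequality
\[
a^{ij}\Phi_{ij}+W^k\Phi_k\le |\nabla u|^2\,Q ,
\]
where $W$ is a vector field that is regular away from critical points. The remainder $Q$ should be a combination of $gf'G+\beta gf^2G'$ and of terms of the form $(\beta-2)$ and $(1-\beta)$. The hypothesis $gf'G+\beta g f^2G'\le 0$ and $\beta\in(1,2)$ are exactly what I expect to make $Q\le 0$. Hopf's maximum principle then says a minimum of $\Phi$ cannot occur at an interior noncritical point, unless $\Phi$ is constant.

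\textbf{Step 3: the critical points, where I expect the real difficulty.} By Theorem~\ref{t1}, $v$ is strictly concave. Since $v'(u)=-F(u)^{-1/2}<0$, this implies that $u$ has a single critical point $x_0$, namely its minimum, and that the level sets of $u$ are convex. At $x_0$,
\[
\Phi(x_0)=-\beta F(u(x_0)) ,
\]
and $F(u(x_0))$ is the largest value of $F(u)$ in $\Omega$. I would try to exclude $x_0$ as a minimum point by writing
\[
\Phi=F(u)\bigl(F(u)^{-1}|\nabla u|^2-\beta\bigr)=F(u)\bigl(|\nabla v|^2-\beta\bigr)
\]
and comparing with the concavity of $v$ near $x_0$.

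I have to flag a concrete worry here. The Hessian of $\Phi$ at $x_0$ is
\[
2u_{ik}u_{kj}+\beta f\,u_{ij} ,
\]
which is positive semidefinite. In the model radial case $g\equiv1$, $f\equiv1$ on the unit disk, where $u=(r^2-1)/4$, one finds
\[
\Phi=\frac{r^2}{4}-\frac{\beta(1-r^2)}{4},
\]
which attains its minimum at the center. So before anything else I would recheck the sign conventions. Possibly the intended statement is a maximum principle, or $\Phi$ should be defined with $+\beta F(u)$. If that is so, I would redo Step 2 with the reversed inequality $a^{ij}\Phi_{ij}+W^k\Phi_k\ge0$. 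Step 3 would then be resolved by showing that $x_0$ cannot be an extremum of the relevant type, using the above Hessian together with $\operatorname{tr}(a^{ij}u_{ij})=f\,G(0)>0$.
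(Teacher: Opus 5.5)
Your objection is correct and decisive: the statement as written is false, so it cannot be proved. The explicit disk is not even needed. Since $F(0)=0$ by \eqref{eq:1.6}, $\Phi=|\nabla u|^2\ge 0$ on $\partial\Omega$. At the interior minimum point $x_0$ of $u$, however, $\Phi(x_0)=-\beta F(u(x_0))<0$. So for \emph{every} admissible $f,g,\Omega$ the minimum of $\Phi$ is never attained on $\partial\Omega$. This includes data satisfying \eqref{eq:1.7}, which your example $f\equiv1$ does not. The paper's proof misses this because it silently uses the opposite sign. Its \eqref{eq:3.1} reads $\Phi_k=2u_iu_{ik}-\beta f u_k$, the Hessian in Case~1 is $2u_{ii}^2-\beta f u_{ii}$, and Section~4 uses $\Phi(\cdot;1)=|\nabla u|^2-u$ for $f\equiv1$. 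What is actually proved and used is therefore the minimum principle for $\Phi=|\nabla u|^2-\beta\int_0^u f(s)\,ds=|\nabla u|^2+\beta F(u)$. That is your second guess, not a maximum principle.

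For that corrected statement your outline has the paper's architecture: Lemma~\ref{l31} gives a differential inequality off the critical set, and then the critical point is excluded. Step~2, however, would fail as you describe it.

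First, do not reverse the inequality: a minimum principle still needs $a^{ij}\Phi_{ij}+W^k\Phi_k\le 0$.

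Second, and more importantly, your tools go in the wrong direction. Discarding the tangential block $u_{\alpha\gamma}$ and applying Cauchy--Schwarz to $\sum_\alpha u_{\alpha\alpha}$ give \emph{lower} bounds for $|D^2u|^2$ (exact only when $n=2$). Since $|D^2u|^2$ enters $a^{ij}\Phi_{ij}$ with coefficient $2g>0$, these tools can only yield $\ge$-inequalities, i.e.\ Payne--Philippin type maximum principles.

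The missing ingredient is the Philippin--Safoui inequality \eqref{eq:3.3}. In your frame it reads $\sum_{\alpha,\gamma\ge2}u_{\alpha\gamma}^2\le\bigl(\sum_{\alpha\ge2}u_{\alpha\alpha}\bigr)^2$. It holds because the tangential Hessian is semidefinite, i.e.\ because the level sets of $u$ are convex. So for $n\ge3$, Theorem~\ref{t1} is needed inside the noncritical computation, not only at $x_0$. With it, substitute $u_{11}=\frac{\beta f}{2}+\frac{\Phi_1}{2|\nabla u|}$, $u_{1\alpha}=\frac{\Phi_\alpha}{2|\nabla u|}$, and $\sum_\alpha u_{\alpha\alpha}=\frac{G}{g}(f-u_{11})$ from the equation. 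One gets
\[
a^{ij}\Phi_{ij}+W^k\Phi_k\le\frac{2-\beta}{g}\Bigl[(1-\beta)f^2G^2+|\nabla u|^2\bigl(gf'G+\beta gf^2G'\bigr)\Bigr]<0 ,
\]
for $\beta\in(1,2)$ under the hypothesis. This is \eqref{eq:3.4}, but with the factor $\beta$ on the $G'$-term, which is exactly what the hypothesis controls. Note that it is not of the form $|\nabla u|^2Q$.

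At $x_0$ you also need $D^2u(x_0)$ to be positive definite. This follows from $D^2v(x_0)=-F^{-1/2}D^2u(x_0)$ and the strict concavity of $v$. Without it, $D^2\Phi(x_0)=2(D^2u)^2-\beta f\,D^2u\ge0$ only places each eigenvalue $\lambda_i$ of $D^2u(x_0)$ in $\{0\}\cup[\beta f/2,\infty)$, and that is compatible with $\Delta u(x_0)=f$. With it, $\lambda_i\ge\beta f/2$ for all $i$. Summing against $\Delta u(x_0)=f$ (the equation at $x_0$, since $G(0)=g(0)$) gives $\beta\le 2/n\le 1$, which is the contradiction.
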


Finally, making use of the minimum principle for $\Phi(\mathbf{x},\beta)$, we derive the following a priori estimates for the solutions of equations \eqref{eq:1.2} and \eqref{eq:1.3}.

\begin{theorem}\label{t3}
Assume that $\Omega \subset \mathbb{R}^{n}$, $n \geq 2$, is a bounded strictly convex domain.
\begin{enumerate}
\item[\textup{(a)}] If $u$ is the solution of
\begin{equation}\label{eq:1.9}
\begin{cases}
\displaystyle \mathrm{div}\left( \frac{\nabla u}{\sqrt{1+|\nabla u|^{2}}}\right) = \frac{1}{(1+|\nabla u|^{2})^{3/2}} & \text{in } \Omega, \\[2mm]
u = 0 & \text{on } \partial\Omega,
\end{cases}
\end{equation}
then
\begin{equation}\label{eq:1.10}
-u_{\min} \geq \left( \sqrt[3]{\frac{\alpha}{2} + \sqrt{\frac{\alpha^{2}}{4} + \frac{1}{27}}} + \sqrt[3]{\frac{\alpha}{2} - \sqrt{\frac{\alpha^{2}}{4} + \frac{1}{27}}} \right)^{2}.
\end{equation}

\item[\textup{(b)}] If $u$ is a solution of
\begin{equation}\label{eq:1.11}
\begin{cases}
\displaystyle \mathrm{div}\left( \frac{\nabla u}{\sqrt{1-|\nabla u|^{2}}}\right) = \frac{1}{(1-|\nabla u|^{2})^{3/2}}, \quad \left\vert \nabla u\right\vert < 1 & \text{in } \Omega, \\[2mm]
u = 0 & \text{on } \partial\Omega,
\end{cases}
\end{equation}
then
\begin{equation}\label{eq:1.12}
-u_{\min} \geq \frac{4}{3}\cos^{2}\left( \frac{1}{3}\cos^{-1}\left( \frac{-3\sqrt{3}\,\alpha}{2}\right) - \frac{2\pi}{3}\right).
\end{equation}
\end{enumerate}
In both cases, $u_{\min} = \min_{\overline{\Omega}} u(\mathbf{x})$, $\alpha = 1/(2(n-1)K_{\max})$, and $K_{\max}$ is the maximum value of the mean curvature $K(s)$ of the boundary $\partial\Omega$.
\end{theorem}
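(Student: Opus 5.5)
The plan is to specialize Theorem~\ref{t2} to $f\equiv 1$. Then $F(u)=-u$ and
\[
\Phi(\mathbf{x},\beta)=|\nabla u|^{2}+\beta u .
\]
The hypothesis of Theorem~\ref{t2} reduces to $\beta g f^{2}G'\le 0$, that is, $G'\le 0$.

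In case (a) we have $g=(1+s)^{-1/2}$ and $G=(1+s)^{-3/2}$, so $G'<0$ and Theorem~\ref{t2} applies for every $\beta\in(1,2)$. In case (b) we have $G=(1-s)^{-3/2}$, so $G'>0$ and the hypothesis fails as stated. Here I would redo the Payne-type computation of Theorem~\ref{t2} directly for $\Phi$, keeping the $G'$ terms and using $|\nabla u|<1$. If the sign genuinely fails, I would pass to the modified function $\tilde\Phi=\int_0^{|\nabla u|^2}\frac{g(s)}{G(s)}\cdots$. I regard this adaptation as the main obstacle. The rest of the argument is the same in both cases, so I describe it for a general admissible $g$.

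\emph{Step 1: a boundary inequality from the minimum principle.} Assume the minimum of $\Phi$ on $\overline\Omega$ is attained at some $P\in\partial\Omega$. I would use Hopf's lemma, or the fact that the minimum is taken over the closure, to get $\partial\Phi/\partial\nu(P)\le 0$, with $\nu$ the outer normal. On $\partial\Omega$ we have $u=0$ and $\nabla u=u_\nu\nu$ with $u_\nu>0$, since $u<0$ inside. I would then use
\[
\Delta u=u_{\nu\nu}+(n-1)K u_\nu ,
\]
and write equation \eqref{eq:1.4} at the boundary as
\[
G(u_\nu^2)\,u_{\nu\nu}+(n-1)K\,g(u_\nu^2)\,u_\nu=G(u_\nu^2).
\]
Substituting this into $\partial_\nu\Phi=2u_\nu u_{\nu\nu}+\beta u_\nu\le 0$ gives
\[
\frac{g}{G}(q_P^2)\,q_P\;\ge\;\frac{2+\beta}{2(n-1)K(P)}\;\ge\;(2+\beta)\,\alpha ,
\]
where $q_P=u_\nu(P)$. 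In case (a) we have $g/G=1+s$, so this reads $q_P+q_P^3\ge(2+\beta)\alpha$. In case (b) we have $g/G=1-s$, so it reads $q_P-q_P^3\ge(2+\beta)\alpha$.

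\emph{Step 2: relating the boundary gradient to $u_{\min}$.} The relation comes from $\Phi\ge\Phi(P)=q_P^2$ on $\overline\Omega$, which gives $|\nabla u|^2\ge q_P^2-\beta u$. I would integrate $|\nabla u|\ge\sqrt{q_P^2-\beta u}$ along a steepest-descent path of $u$ from the minimum point to $\partial\Omega$. This bounds $-u_{\min}$ from below by a function of $q_P$ that is increasing in $q_P$. Together with the lower bound on $q_P$ from Step 1, I then let $\beta$ tend to the endpoint of $(1,2)$ that optimizes the constant.

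The target bounds are $-u_{\min}\ge t^{2}$, where:
\begin{itemize}
\item in case (a), $t$ is the real root of $t^3+t=\alpha$, and Cardano's formula for this cubic gives exactly the right-hand side of \eqref{eq:1.10};
\item in case (b), $t$ is the appropriate root of $t-t^3=\alpha$ lying in $(0,1/\sqrt3)$, and the trigonometric form of Cardano's solution produces \eqref{eq:1.12}.
\end{itemize}
The normalization of $t$ that makes these constants appear exactly, for instance $t=\sqrt{-u}$ type substitutions, has to be fixed during this bookkeeping. That, together with the Lorentzian sign issue above, is where I expect the real work to lie.
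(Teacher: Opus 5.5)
There is a genuine gap, and it is a sign error at the start. With $f\equiv1$ you have $F(u)=-u$, and you take $\Phi=|\nabla u|^2+\beta u$, as \eqref{eq:1.8} literally reads. This function can never attain its minimum on $\partial\Omega$. At the interior minimum point of $u$ it equals $\beta u_{\min}<0$, while on $\partial\Omega$ it equals $|\nabla u|^2\ge 0$. So the premise of your Step~1 is false. Likewise, your Step~2 inequality $|\nabla u|^2\ge q_P^2-\beta u$, evaluated at the minimum point of $u$, reads $0\ge q_P^2+\beta(-u_{\min})>0$. This is also why your boundary inequality carries the constant $(2+\beta)\alpha$: no choice of $\beta$ or ``normalization'' turns it into the $\alpha$ of \eqref{eq:1.10} and \eqref{eq:1.12}.

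The function the paper actually works with is $|\nabla u|^2+\beta F(u)=|\nabla u|^2-\beta u$; see \eqref{eq:3.1}, \eqref{eq:4.1}, \eqref{eq:4.3}. The sign in \eqref{eq:1.8} is inconsistent with its later use. With this $\Phi$, your Step~1 goes through. Take $Q\in\partial\Omega$ where $|\nabla u|$ attains $q_m=\min_{\partial\Omega}|\nabla u|$. Since $\Phi=|\nabla u|^2$ on $\partial\Omega$, and Theorem~\ref{t2} puts the minimum there, $Q$ is a global minimum point of $\Phi$ for every $\beta\in(1,2)$. Then $\partial_\nu\Phi=2u_\nu u_{\nu\nu}-\beta u_\nu\le 0$ gives $u_{\nu\nu}\le\beta/2$, and your boundary form of the equation yields $\frac{g}{G}(q_m^2)\,q_m\ge(2-\beta)\alpha$.

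Step~2 then needs no path integration; that integration would in any case bound the length of the gradient curve rather than $-u_{\min}$. Evaluating $\Phi\ge q_m^2$ at the critical point gives $-\beta u_{\min}\ge q_m^2$ directly. Letting $\beta\downarrow1$ gives $-u_{\min}\ge q_m^2$ and $q_m+q_m^3\ge\alpha$, hence \eqref{eq:1.10} by Cardano. This is exactly the paper's argument. The paper takes $\beta=1$ directly, relying on Case~2 of the proof of Theorem~\ref{t2}. Your limit $\beta\downarrow1$ only needs Theorem~\ref{t2} for $\beta\in(1,2)$, which is a small improvement once the sign is fixed.

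For (b), your observation is correct. With $f\equiv1$ the hypothesis of Theorem~\ref{t2} is $G'\le0$, but $G=(1-s)^{-3/2}$ is increasing. With $s=|\nabla u|^2$, the bracket on the right of \eqref{eq:3.4} becomes $(1-s)^{-3/2}(\beta-1-\tfrac32 s)$, which has no fixed sign. The paper does not address this either; it only says ``following similar steps''. However, your proposal offers no substitute argument, since the modified $\tilde\Phi$ is left unspecified, so part (b) remains unproved.

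If a minimum principle for $|\nabla u|^2-\beta u$ is granted, the corrected computation with $g/G=1-s$ gives $q_m-q_m^3\ge\alpha$. The smallest positive root of $t-t^3=\alpha$, squared, is the right-hand side of \eqref{eq:1.12}.
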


A similar method to obtain a priori bounds for solutions of problems of physical or geometrical interest, based on minimum principles for $P$-functions, has been previously considered by several authors. We refer the reader to \cite{Ph79,PS04} and more recently \cite{En14, En15, EL19, Ma99, Ma00, PP12, To22}. To prove our new minimum principle and a priori estimates, we make use of the techniques from these papers, especially \cite{PS04}, in the case when the solution has a certain convexity property.

\subsection{Organization of the paper}

The paper is organized as follows. In Section~2 we prove the convexity result of Theorem~\ref{t1}, based on a constant rank theorem and a deformation technique. The proofs of Theorem~\ref{t2} and Theorem~\ref{t3} are given in Sections~3 and~4, respectively. In Section~5 we present some complementary results and remarks.

\subsection{Notation}

Throughout this paper we adopt the following notation:
\[
u_{i} = \frac{\partial u}{\partial x_{i}}, \qquad u_{ij} = \frac{\partial^{2}u}{\partial x_{i}\partial x_{j}}.
\]
We use the summation convention: summation from $1$ to $n$ is understood on repeated indices. For example,
\[
u_{ij}u_{i}u_{j} = \sum_{i=1}^{n}\sum_{j=1}^{n} \frac{\partial^{2}u}{\partial x_{i}\partial x_{j}} \frac{\partial u}{\partial x_{i}} \frac{\partial u}{\partial x_{j}}.
\]

\section{Proof of Theorem \ref{t1}}

First, we note that from the definition of $v$ in \eqref{eq:1.6b}, we have
\[
\frac{dv}{du} = -\frac{1}{\sqrt{F}}, \qquad \frac{du}{dv} = -\sqrt{F}.
\]
Moreover, there exists a function $h$ such that $u = h(v)$, and
\begin{equation}\label{eq:2.2}
\begin{aligned}
u_{i} &= h^{\prime} v_{i}, \\
\left\vert \nabla u\right\vert^{2} &= (h^{\prime})^{2} \left\vert \nabla v\right\vert^{2}, \\
u_{ij} &= h^{\prime} v_{ij} + h^{\prime\prime} v_{i}v_{j}, \qquad i,j = 1,\ldots,n.
\end{aligned}
\end{equation}

Next, we find the equation satisfied by $v$ by substituting $u = h(v)$ into \eqref{eq:1.4}. We first write equation \eqref{eq:1.4} as
\begin{equation}\label{eq:2.3}
\left( \frac{g}{G}\delta_{ij} + 2\frac{g^{\prime}}{G}u_{i}u_{j}\right) u_{ij} = f(u),
\end{equation}
then use \eqref{eq:2.2} to rewrite \eqref{eq:2.3} as
\[
\left( \frac{g}{G}\delta_{ij} + 2\frac{g^{\prime}}{G}(h^{\prime})^{2}v_{i}v_{j}\right) \left( h^{\prime} v_{ij} + h^{\prime\prime} v_{i}v_{j}\right) = f(h(v)),
\]
or equivalently,
\begin{equation}\label{eq:2.5}
h^{\prime}\left( \frac{g}{G}\delta_{ij} + 2\frac{g^{\prime}}{G}(h^{\prime})^{2}v_{i}v_{j}\right) v_{ij} + h^{\prime\prime}\left( \frac{g}{G}|Dv|^{2} + 2\frac{g^{\prime}}{G}(h^{\prime})^{2}|Dv|^{4}\right) = f(h(v)).
\end{equation}
Using the facts that
\[
h^{\prime}(v) = \frac{du}{dv} = -\sqrt{F} < 0, \qquad h^{\prime\prime}(v) = \frac{d}{dv}\left( \frac{du}{dv}\right) = -\frac{F^{\prime}}{2\sqrt{F}}(-\sqrt{F}) = \frac{F^{\prime}}{2} = -\frac{f}{2},
\]
we divide \eqref{eq:2.5} by $h^{\prime}$ and obtain
\begin{equation}\label{eq:2.7}
a_{ij}(v, Dv)\, v_{ij} = b(v, Dv),
\end{equation}
where
\[
a_{ij}(v, Dv) = \frac{\sqrt{F}}{f}\left( \frac{g}{G}\delta_{ij} + 2\frac{g^{\prime}}{G}F\,v_{i}v_{j}\right), \qquad b(v, Dv) = -1 - \frac{1}{2}|Dv|^{2}.
\]

\subsection{A constant rank theorem}

\begin{theorem}\label{thm:constant-rank}
Let $u \in C^{4}(\Omega)$ be a solution of problem \eqref{eq:1.4}--\eqref{eq:1.5}, where $\Omega$ is any domain and $f$ satisfies conditions \eqref{eq:1.7}. If $v = v(u)$ defined in \eqref{eq:1.6}--\eqref{eq:1.6b} is a concave function, that is, the Hessian matrix $W = (v_{ij})$ of $v$ is semi-negative definite, then $W$ must have constant rank in $\Omega$.
\end{theorem}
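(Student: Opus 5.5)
We follow the Korevaar--Lewis / Caffarelli--Friedman scheme: the rank is lower semicontinuous, so we only need that the set where it is minimal is open, and that comes from a strong maximum principle applied to a quantity built from the Hessian of $v$. Let $l=\min_{\Omega}\operatorname{rank}W$ and suppose $l<n$, attained at $x_0$. It suffices to show that the set $\{x: \operatorname{rank}W(x)=l\}$ is open. Since it is also relatively closed (rank can only drop at limit points, and $l$ is the minimum), connectedness of $\Omega$ then gives constant rank.

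Near $x_0$ we work with $-W\ge 0$ and its eigenvalues $0\le \lambda_1\le\dots\le\lambda_n$. At $x_0$, $\lambda_{n-l+1},\dots,\lambda_n>0$ and the first $n-l$ vanish. In a small neighbourhood $U$ of $x_0$ we set
\[
\phi=\sigma_{l+1}(-W)
\]
(or, following Bian--Guan, $\phi=\sigma_{l+1}(-W)+\sigma_{l+2}(-W)/\sigma_{l+1}(-W)$). Then $\phi\ge 0$, $\phi(x_0)=0$, and $\phi$ is comparable to $\lambda_1+\dots+\lambda_{n-l}$. The goal is a differential inequality
\[
a_{ij}\phi_{ij}\le C\left(\phi+|\nabla\phi|\right)\quad\text{in }U .
\]
The strong maximum principle then gives $\phi\equiv0$ in $U$, so the rank equals $l$ near $x_0$.

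To derive this inequality we fix a point of $U$ and rotate coordinates so that $W$ is diagonal there, with the small eigenvalues in the "bad" index set $B=\{1,\dots,n-l\}$ and the rest in the "good" set $G$. Up to $O(\phi+|\nabla\phi|)$ terms, the standard computation gives
\[
a_{ij}\phi_{ij}\approx \sigma_l(G)\sum_{i\in B}\Big(-a_{kl}v_{iikl}\Big) \;-\; \sigma_l(G)\sum_{i\in B}\sum_{j\in G}\frac{2\,a_{kk}\,v_{ijk}^2}{\lambda_j}.
\]
We differentiate the equation \eqref{eq:2.7} twice in the direction $e_i$, $i\in B$, to get
\[
a_{kl}v_{iikl} = b_{ii}-2(a_{kl})_i\, v_{ikl}-(a_{kl})_{ii}\, v_{kl}.
\]
Here $b$ and $a_{kl}$ depend on $(v,Dv)$, and $v_{ii}\approx0$, $v_{ik}\approx0$ for $i\in B$. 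The terms involving $(a_{kl})_{p_m}$ contain third derivatives $v_{ikm}$. These fall into two groups:
\begin{itemize}
\item those with $k$ or $m$ in $B$ are controlled by $|\nabla\phi|+\phi$, since $v_{iik}\approx\phi_k$-type terms;
\item those with one index in $G$ are absorbed by completing squares against the good negative term $\sum v_{ijk}^2/\lambda_j$.
\end{itemize}
The zeroth-order remainder is, at leading order,
\[
\sum_{i\in B}\Big(\partial_{v}b\, v_{ii}+\partial_{v}^2 a\cdots\Big)\quad\text{plus the contribution}\quad -\frac12\,\partial_{ii}|Dv|^2 = -\sum_m v_{im}^2 - \sum_m v_m v_{mii}.
\]
Since $v_{im}\approx0$ for $i\in B$, this is $O(\phi+|\nabla\phi|)$.

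The main obstacle is the residual term coming from the dependence of $a_{ij}$ on $v$ through $\sqrt F/f$ and $F$, evaluated against $v_iv_j$ and $v_{kl}$ on the good block. We need this term to have the right sign, namely $\le 0$, rather than be absorbable, and this is exactly where \eqref{eq:1.7} enters. Writing $\psi(v)=\sqrt F/f$, one should find a leading term proportional to $-\big(\psi''\,\cdot\,\big)$-type expressions. The condition $f'>0$ together with $2f'^2-ff''\ge0$ (which says $1/f$ is concave in $u$) should translate into the needed convexity of $\psi$, respectively of the map $v\mapsto -1/(\ldots)$, in $v$. We would attack this first by reducing to the case $g\equiv1$ (where it is the Korevaar--Lewis computation), then checking that the extra terms carrying $g'/G$ are multiplied by $v_iv_j$ contracted with the bad directions and hence vanish to leading order.
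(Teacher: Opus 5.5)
Your skeleton (lower semicontinuity of the rank, $\phi=\sigma_{l+1}(-W)$, strong maximum principle) is the standard one and reduces to the paper's $P=\det W$ when $n=2$. The core computation, however, has a genuine gap.

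\textbf{The unhandled cross term.} The decisive term is the first-order cross term $-2(a_{kl})_i v_{ikl}$ in your twice-differentiated equation. At the chosen point $(a_{kl})_i\approx \partial_v a_{kl}\,v_i$, so for $i\in B$ and $k,l\in G$ this term is linear in the good third derivatives with an $O(1)$ coefficient.
\begin{itemize}
\item It is not $O(\phi+|\nabla\phi|)$.
\item It is not among the $(a_{kl})_{p_m}$-terms you discuss. Those are in fact harmless once summed over $i\in B$, since only $\sum_{i\in B}v_{iim}$ appears.
\end{itemize}
If you absorb it by completing squares, as you propose for mixed third derivatives, you pay a positive remainder of the same order as the $\partial_v^2 a$ term you want to sign. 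In the paper's two-dimensional notation, the leading part of $a_{ij}P_{ij}$ satisfies
\[
-v_{22}^2a_{22,vv}v_1^2-2v_{22}\,a_{22,v}v_1\,v_{122}-2a_{22}v_{122}^2\;\le\; v_{22}^2v_1^2\Bigl(\frac{a_{22,v}^2}{2a_{22}}-a_{22,vv}\Bigr),
\]
with equality at the maximizing value of $v_{122}$. So your route needs $2a_{22}a_{22,vv}\ge a_{22,v}^2$, i.e.\ convexity of $\sqrt{a_{22}}$ in $v$, not of $a_{22}$.

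Hypothesis \eqref{eq:1.7} does not give this. For $g\equiv1$ one has $a_{22}=\psi=\sqrt F/f$. As $u\to0^-$,
\[
2\psi\psi_{vv}=\frac{2F^2}{f^4}(2f'^2-ff'')+\frac{2Ff'}{f^2}\to0, \qquad \psi_v=\frac12+\frac{Ff'}{f^2}\to\frac12 .
\]
So the inequality fails wherever $u$ is near $0$, for every admissible $f$. In the borderline case $f\equiv1$ one has $\psi=v/2$ exactly.

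\textbf{The missing idea.} Use the \emph{once}-differentiated equation to determine the good third derivatives, rather than treating them as free. In the paper, differentiating \eqref{eq:2.7} in $x_1$ gives
\[
a_{22}v_{122}\sim D_1b-v_{22}D_1a_{22}=-v_{22}a_{22,v}v_1 .
\]
This value is twice the maximizer of the quadratic above. At it, the linear and quadratic terms cancel exactly, leaving $a_{22}^2a_{ij}P_{ij}\sim-b^2a_{22,vv}v_1^2$. The explicit formula for $(\sqrt F/f)_{vv}$ then gives the sign from \eqref{eq:1.7}.

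For $n\ge3$ the equation fixes only the combination $\sum_{k\in G}v_{ikk}$ for each $i\in B$. You then need the Cauchy--Schwarz step
\[
\Bigl(\sum_{j\in G}v_{ijj}\Bigr)^2\le\Bigl(\sum_{j\in G}\lambda_j\Bigr)\sum_{j\in G}v_{ijj}^2/\lambda_j
\]
to recover the same cancellation, as in Korevaar--Lewis. This is clean for isotropic $a_{kl}=\psi(v)\delta_{kl}$. The anisotropic $g'$-part needs the general structure condition of Bian--Guan, to which the paper defers for $n\ge3$.

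\textbf{The reduction to $g\equiv1$ fails.} In the relevant contractions $k,l\in G$, so the $g'/G$-part of $a_{kl}$ carries the factors $v_kv_l$. These are gradient components and are not small: at the point only the bad Hessian entries vanish, not gradient components. Indeed, in the paper $a_{22}=\frac{\sqrt F}{f}\bigl(\frac{g}{G}+2\frac{g'}{G}Fv_2^2\bigr)$, and the $g'$-term enters $a_{22,vv}$.

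\textbf{Minor slip.} The condition $2f'^2-ff''\ge0$ says that $1/f$ is convex, not concave.
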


Note that the second condition in \eqref{eq:1.7} means that $1/f(u)$ is convex in $u$. A proof of a more general result was obtained by Bian and Guan (see Corollary~11.3 in \cite{BG09}). For completeness and to illustrate the idea of the proof, we present a proof in the two-dimensional case and refer the reader to \cite{BG09} for its extension to higher dimensions.

\begin{proof}[Proof for dimension $n=2$]
Suppose that $W$ attains its minimal rank~$1$ at some point $z_{0} \in \Omega$. We will prove that the rank of $W$ is identically equal to~$1$ in $\Omega$; otherwise, the rank equals~$2$ in $\Omega$. Define
\[
P(x) = \det W(x).
\]
We have $P(z_{0}) = 0$. We shall show that there exists an open neighborhood $\mathcal{O}$ of $z_{0}$ such that $P(x) = 0$ in $\mathcal{O}$.

If true, this implies that $U = \{x \in \Omega : P(x) = 0\}$ is open. On the other hand, $U$ is also closed since $P(x)$ is continuous on $\Omega$. Therefore, $P(x) = 0$ in $\Omega$, which means that $W$ has rank~$1$. Since $P \geq 0$ in $\Omega$ and $P(z_{0}) = 0$, it remains to prove
\begin{equation}\label{eq:2.11}
\sum_{i,j=1}^{2} F^{ij}P_{ij} \lesssim 0
\end{equation}
in a small open neighborhood $\mathcal{O}$ of $z_{0}$, and then apply the strong minimum principle to conclude $P(x) = 0$ in $\mathcal{O}$.

We use the classical notation from \cite{CF85} and \cite{KL87}. For two functions $p$ and $q$ defined in an open set $\mathcal{O} \subset \Omega$ and $y \in \Omega$, we write
\[
p(y) \lesssim q(y)
\]
when there exist positive constants $C_{1}$ and $C_{2}$ such that
\[
(p - q)(y) \leq \left( C_{1}\left\vert \nabla P\right\vert^{2} + C_{2}P\right)(y).
\]
We write $p \lesssim q$ if the above inequality holds in $\mathcal{O}$ with constants $C_{1}$ and $C_{2}$ independent of $y$ in this neighborhood. We also write
\[
p(y) \backsim q(y) \quad \Leftrightarrow \quad p(y) \lesssim q(y) \text{ and } q(y) \lesssim p(y).
\]

To prove \eqref{eq:2.11} at an arbitrary point $z \in \mathcal{O}$, as in \cite{CF85}, we choose normal coordinates by performing an appropriate rotation about $z$ such that the Hessian matrix $W$ is diagonal at $z$ and $v_{11} \leq v_{22}$ at $z$. Then at $z$ we have:
\[
0 \sim P \sim v_{11}, \qquad 0 \sim P_{i} \sim v_{11i}, \quad i = 1,2,
\]
\[
P_{ij} \sim v_{22}v_{ij11} - 2v_{12i}v_{12j}, \quad i,j = 1,2.
\]
Therefore,
\begin{equation}\label{eq:2.19}
a_{ij}P_{ij} = v_{22}a_{ij}v_{ij11} - 2a_{22}v_{122}^{2}.
\end{equation}

From equation \eqref{eq:2.7}, we have $a_{22}v_{22} \sim b$. Differentiating \eqref{eq:2.7} with respect to $x_{1}$:
\[
D_{1}a_{ij}v_{ij} + a_{ij}v_{ij1} = D_{1}b,
\]
so $a_{22}v_{221} \sim D_{1}b - v_{22}D_{1}a_{22}$. Differentiating again with respect to $x_{1}$:
\begin{equation}\label{eq:2.23}
a_{ij}v_{ij11} \sim D_{11}b - v_{22}D_{11}a_{22} - 2v_{122}D_{1}a_{22}.
\end{equation}

Inserting \eqref{eq:2.23} into \eqref{eq:2.19}:
\[
a_{ij}P_{ij} \sim v_{22}D_{11}b - v_{22}^{2}D_{11}a_{22} - 2v_{22}v_{122}D_{1}a_{22} - 2a_{22}v_{122}^{2}.
\]
Therefore,
\begin{equation}\label{eq:2.25}
a_{22}^{2}a_{ij}P_{ij} \sim a_{22}(a_{22}v_{22})D_{11}b - (a_{22}v_{22})^{2}D_{11}a_{22} - 2(a_{22}v_{22})D_{1}a_{22}(a_{22}v_{122}) - 2a_{22}(a_{22}v_{122})^{2}.
\end{equation}

Next, we compute
\begin{equation}\label{eq:2.26}
D_{1}b = b_{v}v_{1} = 0, \qquad D_{11}b = b_{vv}v_{1}^{2} = 0,
\end{equation}
and
\begin{equation}\label{eq:2.27}
D_{1}a_{22} = a_{22v}v_{1}, \qquad D_{11}a_{22} = a_{22vv}v_{1}^{2}.
\end{equation}

Using \eqref{eq:2.26} and \eqref{eq:2.27} in \eqref{eq:2.25}:
\[
a_{22}^{2}a_{ij}P_{ij} \sim -b^{2}a_{22,vv}v_{1}^{2}.
\]
It remains to show that $a_{22,vv} \gtrsim 0$.

We write $a_{22}$ as
\[
a_{22} = \frac{\sqrt{F}}{f}\left( \frac{g}{G} + 2\frac{g^{\prime}}{G}Fv_{2}^{2}\right) = \frac{\sqrt{F}}{f}\left( 1 - 2\frac{g^{\prime}}{G}Fv_{1}^{2}\right) = \frac{\sqrt{F}}{f} - 2\frac{g^{\prime}}{G}v_{1}^{2}\frac{F\sqrt{F}}{f},
\]
and compute successively:
\[
\left( \frac{\sqrt{F}}{f}\right)_{v} = \frac{\frac{F^{\prime}}{2\sqrt{F}}f - \sqrt{F}f^{\prime}}{f^{2}}(-\sqrt{F}) = \frac{f^{2} + 2Ff^{\prime}}{2f^{2}} = \frac{1}{2} + \frac{Ff^{\prime}}{f^{2}},
\]
\[
\left( \frac{\sqrt{F}}{f}\right)_{vv} = \frac{F\sqrt{F}}{f^{3}}\left(2(f^{\prime})^{2} - ff^{\prime\prime} + \frac{f^{2}f^{\prime}}{F}\right),
\]
and
\[
\left( \frac{F\sqrt{F}}{f}\right)_{v} = \frac{3}{2}F + F\frac{Ff^{\prime}}{f^{2}},
\]
\[
\left( \frac{F\sqrt{F}}{f}\right)_{vv} = \frac{3}{2}f\sqrt{F} + \frac{F\sqrt{F}f^{\prime}}{f} + \frac{F^{2}\sqrt{F}}{f^{3}}\left(2(f^{\prime})^{2} - ff^{\prime\prime} + \frac{f^{2}f^{\prime}}{F}\right).
\]

Therefore,
\begin{align*}
a_{22,vv} &= \frac{F\sqrt{F}}{f^{3}}\left( 2(f^{\prime})^{2} - ff^{\prime\prime} + \frac{f^{2}f^{\prime}}{F}\right) \frac{g}{G} \\
&\quad + 2\frac{g^{\prime}}{G}v_{2}^{2}\left( \frac{3}{2}f\sqrt{F} + \frac{F\sqrt{F}f^{\prime}}{f} + \frac{F^{2}\sqrt{F}}{f^{3}}\left(2(f^{\prime})^{2} - ff^{\prime\prime} + \frac{f^{2}f^{\prime}}{F}\right)\right) \gtrsim 0,
\end{align*}
due to \eqref{eq:1.7} and the fact that $f$ and $F$ are positive. The proof is complete.
\end{proof}

The remaining part of the proof of Theorem~\ref{t1} is standard, based on a deformation process. We first prove that $v$ is concave in the ball.

\subsection{Concavity in the ball}

\begin{lemma}\label{lem:ball}
Let $B_{R}(0)$ be a ball of radius $R$ in $\mathbb{R}^{n}$, $n \geq 2$, and let $u$ be the solution of problem \eqref{eq:1.4}--\eqref{eq:1.5} in $\Omega = B_{R}(0)$. Then $v$, defined in \eqref{eq:1.6}--\eqref{eq:1.6b}, is a strictly concave function in $B_{R}(0)$.
\end{lemma}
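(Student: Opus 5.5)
In the ball, uniqueness and rotational invariance of the problem give that $u$ is radial, $u=u(r)$ with $r=|x|$, and therefore so is $v=v(r)$. I would first check that $u<0$ in $B_R(0)$, so that $v$ is defined and positive there. This follows from the maximum principle: the right-hand side of \eqref{eq:1.4} is positive, so $u$ is a strict subsolution of a uniformly elliptic operator and $u<0=u|_{\partial B_R}$.

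In radial form \eqref{eq:1.4} reads
\[
\frac{1}{r^{n-1}}\big(r^{n-1}g(u_r^2)u_r\big)_r=f(u)G(u_r^2).
\]
Integrating from $0$ to $r$, with $u_r(0)=0$ and the right-hand side positive, gives $u_r>0$ for $0<r\le R$. Hence $v_r=-u_r/\sqrt{F(u)}<0$ for $r>0$.

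For a radial function the Hessian has eigenvalues $v_{rr}$ (radial direction) and $v_r/r$ (tangential directions, multiplicity $n-1$). The tangential eigenvalues are therefore strictly negative for $r>0$, and strict concavity reduces to showing $v_{rr}<0$ on $(0,R)$ and $v_{rr}(0)<0$.

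At the origin, $v_{rr}(0)=v_r/r|_{r\to0}$, and this is handled by equation \eqref{eq:2.7}. At $r=0$ we have $Dv=0$, so the equation reduces to $\frac{\sqrt F g}{fG}\Delta v=-1$, which forces $\Delta v(0)<0$. By symmetry all eigenvalues at $0$ are equal, so $v_{ij}(0)$ is negative definite.

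For $r\in(0,R)$, I would write \eqref{eq:2.7} in radial coordinates. In the radial direction $a_{ij}$ has eigenvalue $\frac{\sqrt F}{f}\frac{g+2g'Fv_r^2}{G}=\frac{\sqrt F}{f}$, since $F v_r^2=u_r^2=s$ and $G=g+2sg'$. In the tangential directions the eigenvalue is $\frac{\sqrt F}{f}\frac gG>0$. The equation becomes
\[
\frac{\sqrt F}{f}\Big(v_{rr}+\frac gG\frac{n-1}{r}v_r\Big)=-1-\tfrac12 v_r^2 .
\]
This does not immediately give the sign of $v_{rr}$, because the tangential term is negative. So I would combine it with the continuity method. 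Near $r=0$ we have $v_{rr}<0$ by continuity. Suppose $r_0$ is the first point where $v_{rr}=0$. Then on $[0,r_0]$ the Hessian is negative semidefinite, i.e. $v$ is concave on $B_{r_0}$, and it has lower rank at $r_0$.

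To reach a contradiction, I would apply the constant rank Theorem~\ref{thm:constant-rank} on the open set $B_{r_0+\epsilon}$ or rather in a neighbourhood argument. This needs concavity in an open set, which is not yet known. The alternative is a direct ODE argument at $r_0$: differentiate the radial equation and show that $v_{rrr}(r_0)<0$. This follows if $(\sqrt F/f)_v>0$, which holds by the computation $\frac12+\frac{Ff'}{f^2}>0$ given earlier using $f'>0$, together with the positivity of the factors involving $g/G$ at $r_0$. Then $v_{rr}$ cannot cross zero from below, so $v_{rr}<0$ on $(0,R)$.

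I would also handle the boundary, where $F(u)\to0$ and $v$ is only defined up to $\partial B_R$. Near $r=R$, expansion gives $v\approx c\sqrt{R-r}\cdot$const, which is strictly concave.

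The main obstacle is the sign of $v_{rr}$ in the interior. The direct ODE derivative computation at a first zero involves $(g/G)'$ terms whose sign is not controlled by the hypotheses. If it fails, I would fall back on the standard approach of \cite{KL87}: first treat the case $g\equiv1$, whose solution is known to have concave $v$, and deform $g_t=(1-t)+tg$. Radial solutions persist along the deformation. Strict concavity is open in $t$, and by Theorem~\ref{thm:constant-rank} together with the boundary behaviour and the behaviour at the origin it is also closed. Hence it holds at $t=1$.
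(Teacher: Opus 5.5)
Your preliminary steps are sound: radial symmetry (uniqueness does hold by the comparison principle, since $f'>0$ makes $-f(u)G$ nonincreasing in $u$), $u_r>0$, the tangential eigenvalues $v_r/r<0$, the value $v_{ij}(0)=-\frac{f}{n\sqrt F}\delta_{ij}$ at the origin, and the behaviour near $r=R$. The gap is the central step, $v_{rr}<0$ on $(0,R)$, which the proposal never establishes.

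\textbf{Why the argument on $v_{rr}$ fails.} The first-zero argument applied to $v_{rr}$ cannot close, for a structural reason. Since $u_{rr}=h'v_{rr}+h''v_r^2$ with $h''=-f/2$, at a point where $v_{rr}=0$ you have $u_{rr}=-\tfrac f2 v_r^2<0$. Hence $(u_r^2)_r\neq 0$ there. So the derivative of $g/G$ along the solution enters $v_{rrr}(r_0)$ with a nonzero coefficient. Its contribution is positive whenever $g/G$ is decreasing in $s$. A concrete example is the Lorentzian case $g=(1-s)^{-1/2}$, where $G=(1-s)^{-3/2}$ and $g/G=1-s$. So the hypotheses do not yield $v_{rrr}(r_0)<0$.

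\textbf{The fallback is only a sketch.} Deforming $g_t=(1-t)+tg$ would need all of the following, none of which you verify:
\begin{enumerate}
\item existence and uniform estimates for every $g_t$, which is nontrivial, for instance under the spacelike constraint;
\item a boundary strip on which strict concavity holds uniformly in $t$;
\item the $n$-dimensional constant rank theorem;
\item the $g\equiv 1$ result as a starting point.
\end{enumerate}
This is far heavier than the lemma requires.

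\textbf{The missing idea.} Run the first-zero argument on $u$ rather than on $v$. With $u=\varphi(r)$, the radial equation is $G\varphi''+(n-1)g\varphi'/r=f(\varphi)G$, with $g,G$ evaluated at $\varphi'^2$. Since $G(0)=g(0)$, this gives $\varphi''(0)=f(\varphi(0))/n>0$. Let $r_0$ be the first zero of $\varphi''$ and differentiate the equation. Every term involving $g'$ or $G'$ carries a factor $\varphi''$ and vanishes at $r_0$, leaving
\[
G\,\varphi'''(r_0)=(n-1)g\,\frac{\varphi'(r_0)}{r_0^{2}}+f'(\varphi(r_0))\,\varphi'(r_0)\,G>0 .
\]
This contradicts $\varphi''>0$ on $[0,r_0)$, so $\varphi''>0$ throughout.

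Then write $v=\psi(u)$ with $\psi(t)=\int_t^0F^{-1/2}\,dy$. One has $\psi'=-F^{-1/2}<0$ and $\psi''=-\tfrac12 fF^{-3/2}<0$. Hence $v_{rr}=\psi'\varphi''+\psi''\varphi'^2<0$. Together with your tangential and origin computations, this gives strict concavity. This is essentially the paper's argument.
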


\begin{proof}
From a seminal paper of Gidas, Ni, and Nirenberg~\cite{GNN79}, we know that $u$ is radially symmetric in the ball:
\[
u(x) = \varphi(x) = \varphi(r), \qquad r = |x|,
\]
for some function $\varphi(r)$ defined in $[0,R]$ with $\varphi(r) < 0$. Then $\varphi$ is an increasing function of $r$ in $(0,R)$ and $\varphi^{\prime}(0) = \varphi(R) = 0$. The function $\varphi$ satisfies the ordinary differential equation
\begin{equation}\label{eq:2.37}
G\varphi^{\prime\prime} + (n-1)g\frac{\varphi^{\prime}}{r} = f(\varphi)G,
\end{equation}
where $f > 0$ and $G > 0$. Therefore,
\begin{equation}\label{eq:2.38}
G(0)\varphi^{\prime\prime}(0) + (n-1)g(0)\lim_{r \to 0}\frac{\varphi^{\prime}(r)}{r} = f(\varphi(0))G(0).
\end{equation}
By l'H\^opital's rule,
\begin{equation}\label{eq:2.39}
\lim_{r \to 0}\frac{\varphi^{\prime}(r)}{r} = \varphi^{\prime\prime}(0).
\end{equation}
Combining \eqref{eq:2.38} and \eqref{eq:2.39}:
\[
\varphi^{\prime\prime}(0) = \frac{f(\varphi(0))G(0)}{(n-1)g(0) + G(0)} > 0.
\]

We assume by contradiction the existence of a smallest $r_{0}$ for which $\varphi^{\prime\prime}(r_{0}) = 0$. Note that
\[
\varphi^{\prime}(r) > 0, \quad r \in (0,r_{0}], \qquad \varphi^{\prime\prime}(r) > 0, \quad r \in [0,r_{0}).
\]
Differentiating \eqref{eq:2.37}:
\[
2G^{\prime}\varphi^{\prime}(\varphi^{\prime\prime})^{2} + G\varphi^{\prime\prime\prime} + 2(n-1)g^{\prime}\varphi^{\prime}\varphi^{\prime\prime}\frac{\varphi^{\prime}}{r} + (n-1)g\left(\frac{\varphi^{\prime\prime}}{r} - \frac{\varphi^{\prime}}{r^{2}}\right) = f^{\prime}\varphi^{\prime}G + 2fG^{\prime}\varphi^{\prime}\varphi^{\prime\prime}.
\]
At $r = r_{0}$:
\[
G(\varphi^{\prime}(r_{0})^{2})\varphi^{\prime\prime\prime}(r_{0}) = (n-1)g\frac{\varphi^{\prime}(r_{0})}{r_{0}^{2}} + f^{\prime}(\varphi(r_{0}))\varphi^{\prime}(r_{0})G(\varphi^{\prime}(r_{0})^{2}) > 0,
\]
so $\varphi^{\prime\prime\prime}(r_{0}) > 0$, which contradicts the sign of $\varphi^{\prime\prime}$. Thus $\varphi^{\prime\prime} > 0$.

Since $v(r) = \int_{0}^{\varphi(r)}\frac{dy}{\sqrt{F(y)}}$, we have $v^{\prime} = \frac{\varphi^{\prime}}{\sqrt{F(\varphi)}}$ and
\[
v^{\prime\prime} = \frac{\varphi^{\prime\prime}F + f(\varphi^{\prime})^{2}}{2F\sqrt{F}} > 0,
\]
so $v$ is strictly convex in $[0,R)$.
\end{proof}

\subsection{Proof of Theorem~\ref{t1}}

We first state the following known Boundary Convexity Lemma (see \cite{CF85} or \cite{KL87}).

\begin{lemma}\label{l23}
Let $\Omega \subset \mathbb{R}^{N}$ be a smooth bounded and strictly convex domain (i.e., all principal curvatures of $\partial\Omega$ are positive). Let $u \in C^{\infty}(\Omega) \cap C^{1,1}(\overline{\Omega})$ satisfy
\[
u < 0 \text{ in } \Omega, \qquad u = 0, \quad \frac{\partial u}{\partial \mathbf{n}} > 0 \text{ on } \partial\Omega,
\]
where $\mathbf{n}$ is the unit exterior normal to $\partial\Omega$. Let
\[
\Omega_{\epsilon} = \{x \in \Omega : d(x, \partial\Omega) > \epsilon\},
\]
and $v = \phi(u)$. Then, for small enough $\epsilon$, the function $v$ is strictly concave in the boundary strip $\Omega \setminus \Omega_{\epsilon}$, provided $\phi$ satisfies
\[
\phi^{\prime} > 0, \qquad \phi^{\prime\prime} < 0, \qquad \lim_{u \to 0^{-}}\frac{\phi^{\prime}}{\phi^{\prime\prime}} = 0.
\]
\end{lemma}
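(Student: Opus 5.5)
The plan is a direct computation of the Hessian of $v=\phi(u)$ in the strip $\Omega\setminus\Omega_\epsilon$, split into normal and tangential directions of the level sets of $u$. The relevant directions are fixed by the boundary behaviour of $u$: it vanishes on $\partial\Omega$, its gradient points along $\mathbf n$ there, and $\partial\Omega$ has positive principal curvatures. I should say in advance that with the signs exactly as worded, this computation points to strict convexity of $v$ in the tangential directions rather than concavity (see the last paragraph).

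\textbf{Step 1 (setup).} Since $\partial u/\partial\mathbf n>0$ on the compact set $\partial\Omega$ and $u\in C^{1,1}(\overline\Omega)$, there are $\epsilon_0>0$ and $c_0>0$ with $|\nabla u|\ge c_0$ in $\Omega\setminus\Omega_{\epsilon_0}$. There $\nu=\nabla u/|\nabla u|$ is well defined, and $|D^2u|\le M$ by the $C^{1,1}$ bound. Moreover $u\to0^-$ uniformly as $\epsilon\to0$, so $\delta(\epsilon):=\sup_{\Omega\setminus\Omega_\epsilon}|\phi'(u)/\phi''(u)|\to0$ by the third hypothesis on $\phi$.

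\textbf{Step 2 (Hessian identity).} For a unit vector $\xi=a\nu+b\tau$ with $\tau\perp\nu$ and $a^2+b^2=1$,
\[
v_{ij}\xi_i\xi_j=\phi''\Big(\frac{\phi'}{\phi''}\,u_{ij}\xi_i\xi_j+a^2|\nabla u|^2\Big).
\]
Because $\phi''<0$, strict concavity of $v$ amounts to the bracket being positive for every unit $\xi$. The term $a^2|\nabla u|^2\ge a^2c_0^2$ controls every direction with $|a|$ bounded below, since the other term is $O(\delta)$. The mixed part $2ab\,(\phi'/\phi'')u_{\nu\tau}$ is absorbed by Cauchy--Schwarz into $\tfrac12 a^2c_0^2+C\delta^2b^2$. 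So everything reduces to the purely tangential term $b^2(\phi'/\phi'')u_{\tau\tau}$.

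\textbf{Step 3 (tangential term; the main obstacle).} Differentiate the identity $u=0$ twice along $\partial\Omega$, in the graph representation of $\partial\Omega$ over its tangent plane. This gives $u_{\tau\tau}=\kappa(\tau)\,\partial u/\partial\mathbf n$ at boundary points, with $\kappa(\tau)>0$ the normal curvature. Hence $u_{\tau\tau}\ge c_1>0$ on $\partial\Omega$. By continuity of the tangential second derivatives (I will use smoothness of $u$ up to $\partial\Omega$, which holds for the solutions considered), this persists in a thin strip. Since $\phi'/\phi''<0$, this sign makes the tangential contribution to the bracket negative, of size $-\delta\,b^2u_{\tau\tau}$.

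This is exactly where the stated sign convention matters. With $\phi'>0$, $v$ is, to leading order, strictly convex in the tangential directions: the tangential level-set convexity of $u$ is inherited by $v$. This matches Lemma~\ref{lem:ball}, whose proof ends with $v''>0$. The first thing I would try is therefore to run the argument for $-v$, or equivalently to read the conclusion with the orientation used in Lemma~\ref{lem:ball}. Steps 1--2 then go through verbatim, and the tangential term has the favourable sign $\delta b^2u_{\tau\tau}\ge \delta c_1b^2$.

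To finish, one adds the pieces. The lower bound is $\tfrac12a^2c_0^2+\delta b^2(c_1-C\delta)-O(\delta)a^2$, and this is positive for all unit $\xi$ once $\epsilon$ is small. The main care needed is that $\delta$ appears both in the good tangential term and in the error terms, so the error terms must carry a factor $a^2$ or $\delta^2$ and not merely $\delta$. This is why the splitting in Step 2 is arranged as it is.
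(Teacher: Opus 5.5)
The paper gives no proof of this lemma; it only cites Caffarelli--Friedman and Korevaar--Lewis, so your argument has to stand on its own. Your Steps 1--3 are the standard computation, and you are right that the printed statement cannot hold. However, your proposed repair does not work.

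\textbf{Why passing to $-v$ fails.} Replacing $v$ by $-v$ replaces $\phi$ by $-\phi$. The quantity that decides the sign of your Step~2 bracket, $\phi'/\phi''$, is unchanged by this. Indeed $(-v)_{\xi\xi}=-\phi''\big(\tfrac{\phi'}{\phi''}u_{\xi\xi}+a^2|\nabla u|^2\big)$, so strict convexity of $-v$ is literally the same condition as strict concavity of $v$. The tangential part $\tfrac{\phi'}{\phi''}b^2u_{\tau\tau}$ stays negative. Steps 1--2 do go through verbatim, but the ``favourable sign'' never appears.

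Under the printed hypotheses $v$ is a genuine saddle near $\partial\Omega$: $v_{\nu\nu}\approx\phi''|\nabla u|^2<0$, while $v_{\tau\tau}=\phi'u_{\tau\tau}>0$. Concretely, take $\Omega=B_1$, $u=|x|^2-1$ and $\phi(t)=-t^2/2$. This $\phi$ satisfies $\phi'>0$, $\phi''<0$ and $\phi'/\phi''=t\to0$. It gives $v=-(1-r^2)^2/2$, with $v_{rr}(1)=-4$ and tangential eigenvalue $v_r/r=2(1-r^2)>0$. So neither $v$ nor $-v$ is convex in any boundary strip. The appeal to Lemma~\ref{lem:ball} does not rescue this either. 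The function $\int_0^{\varphi}F^{-1/2}\,dy$ used there corresponds to $\phi'=F^{-1/2}>0$ \emph{and} $\phi''=f/(2F^{3/2})>0$, which violates $\phi''<0$.

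\textbf{The correct repair.} What must change is the hypothesis on $\phi$: you need $\phi'/\phi''\to0^{+}$, i.e.\ $\phi'$ and $\phi''$ of the same sign.
\begin{itemize}
\item With $\phi'<0$, $\phi''<0$ the conclusion is strict concavity. This is exactly the case used in Theorem~\ref{t1}, where $v=\int_u^0F^{-1/2}$ has $\phi'=-F^{-1/2}$, $\phi''=-f/(2F^{3/2})$ and $\phi'/\phi''=2F/f\to0^{+}$.
\item With $\phi'>0$, $\phi''>0$ one gets strict convexity.
\item Equivalently, keep the printed conditions on $\phi$ but take $u>0$ in $\Omega$ with $\partial u/\partial\mathbf n<0$, which is the original Korevaar/Caffarelli--Friedman setting.
\end{itemize}

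\textbf{Two adjustments under the corrected hypothesis.} With this change your Steps 1--3 do give a proof, provided you make two fixes.

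First, do the estimate pointwise with $\rho(x)=\phi'(u(x))/\phi''(u(x))>0$, not with the supremum $\delta$. The good term is only $\ge\rho(x)c_1b^2$, and $\rho(x)\to0$ at $\partial\Omega$, so an error $C\delta^2b^2$ with $\delta$ fixed on the strip is not absorbed. Bounding $2\rho|ab||u_{\nu\tau}|\le\tfrac12a^2c_0^2+2M^2c_0^{-2}\rho^2b^2$ gives the bracket $\ge a^2(\tfrac12c_0^2-\rho M)+\rho b^2(c_1-2M^2c_0^{-2}\rho)$. This is positive once $\sup\rho$ is small.

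Second, propagating $u_{\tau\tau}\ge c_1$ from $\partial\Omega$ into the strip, along directions orthogonal to $\nabla u(x)$, needs continuity of $D^2u$ up to $\partial\Omega$. $C^{1,1}(\overline\Omega)$ alone does not give it. You flagged this, and it should be added as a hypothesis; it holds for the solutions used in the paper.
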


We now use the deformation technique combined with the constant rank theorem (see the same technique in \cite{KL87, MX08, LMX10}). For an arbitrary strictly convex domain $\Omega$, set $\Omega_{t} = (1-t)B + t\Omega$, $0 \leq t \leq 1$, where $B$ is the unit ball. From the theory of convex bodies (\cite{Sc14}), we can deform $B$ continuously into $\Omega$ by a family $\{\Omega_{t}\}$, $0 \leq t < 1$, of strictly convex domains such that $\partial\Omega_{t} \to \partial\Omega_{s}$ as $t \to s$ in the sense of Hausdorff distance, whenever $0 \leq s \leq 1$. Furthermore, $\partial\Omega_{t}$ can be locally represented by a function whose $C^{2,\alpha}$ norm, $0 < \alpha < 1$, depends only on $\delta$, whenever $0 < t \leq 1$.

Let $u_{t} \in C^{\infty}(\Omega_{t})$ be the solution of problem \eqref{eq:1.4}--\eqref{eq:1.5} on $\Omega_{t}$, $v_{t} := v(u_{t})$ be the function defined by \eqref{eq:1.6}--\eqref{eq:1.6b}, and $H_{t}$ be the corresponding Hessian matrix of $v_{t}$. Note that $H_{0}$ is positive definite, and from Lemma~\ref{l23}, $H_{\delta}$ is positive definite in an $\epsilon$-neighborhood of $\partial\Omega_{\delta}$. From the a priori estimates on the solution $u$ of problem \eqref{eq:1.4}--\eqref{eq:1.5} (\cite{GT77}), we know this bound depends only on the uniformly bounded geometry of $\Omega_{t}$, which depends on $\Omega$ and $t$. We conclude that if $v(\cdot,s)$ is strictly convex for all $0 \leq s < t$, then $v(\cdot,t)$ is convex. Therefore, if for some $\delta$, $0 < \delta < 1$, the matrix $H_{\delta}$ is positive semi-definite but not positive definite in $\Omega_{\delta}$, this is impossible by the constant rank theorem (Theorem~\ref{thm:constant-rank}) and boundary estimates (Lemma~\ref{l23}). We conclude that $H_{\delta}$ is positive definite. Thus, the function $v$ defined in \eqref{eq:1.6}--\eqref{eq:1.6b} is strictly concave in $\Omega$.

\section{Proof of Theorem~\ref{t2}}

Since the right-hand side of \eqref{eq:1.4} is positive, the strong maximum principle implies that $u < 0$ in $\Omega$, so $u$ attains its minimum at some interior point of $\Omega$. Moreover, since $v$ defined in \eqref{eq:1.6}--\eqref{eq:1.6b} is strictly concave, there is only one interior minimum point for the solution of problem \eqref{eq:1.4}--\eqref{eq:1.5}.

\begin{lemma}\label{l31}
If $\beta \in (1,2)$, then the auxiliary function $\Phi(\mathbf{x};\beta)$ attains its minimum value at the critical point of $u$ or at some point of the boundary $\partial\Omega$.
\end{lemma}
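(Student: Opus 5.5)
We follow the standard $P$-function technique (as in \cite{PS04, Sp81}). We derive an elliptic differential inequality for $\Phi=|\nabla u|^2-\beta F(u)$ at points where $\nabla u\neq 0$, and then apply Hopf's maximum principle in the form of a minimum principle. Write \eqref{eq:1.4} in nondivergence form as in \eqref{eq:2.3}:
\[
\Delta u+2\frac{g'}{g}u_iu_ju_{ij}=\frac{f\,G}{g}.
\]
Set $L w := w_{ii}+2\frac{g'}{g}u_iu_j w_{ij}$, which is uniformly elliptic on compact subsets since $g>0$ and $G>0$.

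We differentiate $\Phi$ twice:
\[
\Phi_k=2u_iu_{ik}+\beta f u_k,\qquad \Phi_{kl}=2u_{ik}u_{il}+2u_iu_{ikl}+\beta f' u_ku_l+\beta f u_{kl}.
\]
We then compute $L\Phi$. The third-order terms $u_iu_{ikl}$ are eliminated by differentiating the equation in $x_i$ and contracting with $u_i$. This produces terms involving $f'|\nabla u|^2G/g$, the derivative of $G/g$ (bringing in $G'$ and $g'$), and the term $4\frac{g'}{g}(u_ku_{kl}u_l)\ldots$ coming from differentiating the coefficient $g'/g$.

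At a point where $\nabla u\ne0$, we rotate coordinates so that $\nabla u=(|\nabla u|,0,\dots,0)$. Using $\Phi_k$, we express $u_{1k}=(\Phi_k-\beta f u_k)/(2u_1)$. In particular, $u_{11}=\Phi_1/(2u_1)-\beta f/2$ and $u_{1\alpha}=\Phi_\alpha/(2u_1)$ for $\alpha\ge2$. Substituting these, and collecting every term containing $\nabla\Phi$ into a drift $W_k\Phi_k$ whose coefficient is bounded wherever $\nabla u\ne 0$, we aim for
\[
L\Phi+W_k\Phi_k = 2\sum u_{ij}^2\ (\text{weighted})+\ldots\ \ge\ 0\quad\text{or}\quad\le 0 .
\]
The Schwarz-type inequality $\sum_{ij}u_{ij}^2\ge u_{11}^2+\frac{(\Delta u-u_{11})^2}{n-1}$ alone does not give a sign. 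Here we use Theorem~\ref{t1}: concavity of $v$ gives the sign of the Hessian of $u$ in the directions orthogonal to $\nabla u$, via $u_{ij}=h'v_{ij}+h''v_iv_j$ with $h'<0$. This lets us control the residual terms $\sum_{\alpha\ge 2}u_{\alpha\alpha}$ and $\sum u_{\alpha\beta}^2$.

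The remaining zeroth-order terms (modulo $\nabla\Phi$) should group into a multiple of $\beta(2-\beta)f^2$ plus a term proportional to $|\nabla u|^2\big(g f' G+\beta g f^2 G'\big)$. The first has a definite sign because $\beta<2$, and $\beta>1$ ensures the correct sign of the cross terms. The second is $\le 0$ under the hypothesis. Together these yield $L\Phi+W_k\Phi_k\le 0$ on $\{\nabla u\ne0\}$ in the sense required to apply Hopf's principle, so $\Phi$ cannot have an interior minimum there unless it is constant.

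The conclusion of Lemma~\ref{l31} then follows. Points with $\nabla u=0$ reduce to the unique critical point of $u$ (uniqueness follows from the strict concavity of $v$, as noted above). Hence the minimum of $\Phi$ is attained either at this critical point or on $\partial\Omega$.

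The main obstacle is the algebra in the middle step: organizing the $u_{ij}^2$ terms and the terms from $g'$, $G'$ so that the concavity of $v$ (equivalently, of the level sets) and the restriction $\beta\in(1,2)$ produce exactly the structural condition $gf'G+\beta gf^2G'\le0$. We expect the anisotropic coefficient $2g'/g$ in the $u_1u_1$ direction to force the careful tracking of $u_{11}$ separately from the tangential block.
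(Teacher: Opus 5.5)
Your outline has the same architecture as the paper's proof. You compute $g\Delta\Phi+2g'u_ku_l\Phi_{kl}$, eliminate third derivatives with the differentiated equation, and rotate so that $\nabla u=|\nabla u|e_1$. You then trade $u_{1k}$ for $\Phi_k$, bound the tangential block of $D^2u$ using convexity of the level sets, and apply Hopf away from the unique critical point. You correctly see that convexity supplies the needed \emph{upper} bound $\sum_{\alpha,\gamma\ge2}u_{\alpha\gamma}^2\le\big(\sum_{\alpha\ge2}u_{\alpha\alpha}\big)^2$, which is exactly \eqref{eq:3.3}.

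The gap is that the decisive step, the sign of the zeroth-order remainder, is only asserted (``should group into''). With the derivative you wrote, it is false. Your $\Phi_k=2u_iu_{ik}+\beta f u_k$ is the literal derivative of \eqref{eq:1.8}, since $F'=-f$. So $u_{11}\equiv-\beta f/2$ modulo $\nabla\Phi$, and carrying out the reduction gives the remainder
\[
(2+\beta)\Big[(1+\beta)\frac{G^2f^2}{g}+|\nabla u|^2\big(f'G-\beta f^2G'\big)\Big].
\]
In the paper's setting, $f'>0$ by \eqref{eq:1.7}, and together with the hypothesis of Theorem~\ref{t2} this forces $G'<0$. The remainder is then strictly positive, so no inequality $L\Phi+W_k\Phi_k\le0$ comes out. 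For $g\equiv f\equiv1$ one has exactly $\Delta\Phi=2|D^2u|^2+\beta>0$. Your $\Phi$ is strictly subharmonic, which is the wrong direction for a minimum principle.

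The paper's computation in \eqref{eq:3.1} uses $\Phi_k=2u_{ik}u_i-\beta f u_k$, i.e.\ it works with $|\nabla u|^2-\beta\int_0^u f(s)\,ds$. Section~4 uses the same function: $\Phi(\mathbf{x};1)=|\nabla u|^2-u$ for $f\equiv1$. The sign in \eqref{eq:1.8} is inconsistent with this. With that choice:
\begin{itemize}
\item $u_{11}\equiv\beta f/2$ modulo $\nabla\Phi$;
\item the equation gives $\sum_{\alpha\ge2}u_{\alpha\alpha}=\frac{G}{g}(f-u_{11})$;
\item the $u_{11}^2$-terms carry the coefficient $2G-4|\nabla u|^2G'$.
\end{itemize}
After applying \eqref{eq:3.3}, everything collapses to
\[
(\beta-2)\frac{G}{g}\Big[(\beta-1)Gf^2-|\nabla u|^2\Big(f'g+\beta\frac{G'}{G}gf^2\Big)\Big]\le0 .
\]
This is \eqref{eq:3.4}, read as an inequality and with the factor $\beta$ on the $G'$-term, which is what matches the hypothesis of Theorem~\ref{t2}.

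So the leading term is $(\beta-1)(\beta-2)G^2f^2/g$, not a multiple of $\beta(2-\beta)f^2$. The condition $\beta>1$ makes $(\beta-1)Gf^2>0$, and $\beta<2$ makes the prefactor negative. To close your argument, you must use this sign of the $\beta$-term and actually carry out the reduction.
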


\begin{proof}
Differentiating \eqref{eq:1.8}:
\begin{equation}\label{eq:3.1}
\Phi_{k} = 2u_{ik}u_{i} - \beta f u_{k},
\end{equation}
and
\begin{equation}\label{eq:3.2}
\Phi_{kl} = 2(u_{ikl}u_{i} + u_{ik}u_{il}) - \beta f^{\prime}u_{k}u_{l} - \beta f u_{kl}.
\end{equation}

We recall the following inequality due to Philippin and Safoui~\cite{PS04}:
\begin{equation}\label{eq:3.3}
u_{ik}u_{ik}|\nabla u|^{2} \leq |\nabla u|^{2}(\Delta u)^{2} + 2u_{ij}u_{i}u_{kj}u_{k} - 2(\Delta u)u_{ij}u_{i}u_{j}.
\end{equation}

Using \eqref{eq:3.1}--\eqref{eq:3.3} and equation \eqref{eq:1.4}, after some manipulations we obtain (see also \cite{PS04}):
\begin{equation}\label{eq:3.4}
\begin{aligned}
L\Phi &:= g\Delta\Phi + 2g^{\prime}\Phi_{kj}u_{k}u_{j} + W_{k}\Phi_{k} \\
&= (\beta - 2)\frac{G}{g}\left( (\beta - 1)Gf^{2} - \left( f^{\prime}g + \frac{G^{\prime}}{G}gf^{2}\right) \left\vert \nabla u\right\vert^{2}\right),
\end{aligned}
\end{equation}
where $W_{k}$ is a smooth vector function that is singular at the critical point of $u$. The right-hand side of \eqref{eq:3.4} is non-positive due to our assumption on the data. The conclusion follows from the strong maximum principle.
\end{proof}

\begin{lemma}\label{l32}
If $\beta \in [1,2]$, then the auxiliary function $\Phi(\mathbf{x};\beta)$ cannot be identically constant on $\overline{\Omega}$.
\end{lemma}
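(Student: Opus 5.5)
Suppose, for contradiction, that $\Phi(\mathbf{x};\beta)\equiv c$ on $\overline{\Omega}$ for some $\beta\in[1,2]$. We first pin down $c$ and then test the identity at the interior minimum point of $u$.

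\textbf{Step 1: the value of $c$.} On $\partial\Omega$ we have $u=0$, so $F(u)=F(0)=0$. Hence $c=|\nabla u|^2$ on $\partial\Omega$. By the Hopf lemma (the right-hand side of \eqref{eq:1.4} is positive and $u<0$ in $\Omega$), $\partial u/\partial\mathbf{n}>0$ on $\partial\Omega$, so $c>0$.

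\textbf{Step 2: evaluation at the minimum point.} Let $\mathbf{x}_0$ be the (unique) interior minimum point of $u$. There $\nabla u(\mathbf{x}_0)=0$, and therefore
\[
c=\Phi(\mathbf{x}_0;\beta)=-\beta F(u_{\min}).
\]
Since $u_{\min}<0$, we have $F(u_{\min})>0$. With $\beta\ge 1$ this gives $c<0$, contradicting Step~1.

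This already settles the lemma. For robustness, one can also argue through the derivatives. If $\Phi$ is constant, then $\Phi_k=0$, that is, $2u_{ik}u_i=\beta f u_k$. Differentiating once more gives $\Phi_{kl}=0$. At $\mathbf{x}_0$ this yields $2u_{ik}u_{il}=\beta f u_{kl}$. Taking the trace and combining with the equation \eqref{eq:2.3} at a critical point, namely $g\Delta u=fG$, leads to an incompatible algebraic identity for the eigenvalues of $D^2u(\mathbf{x}_0)$, which are positive by the convexity from Theorem~\ref{t1}. I expect this derivative route is unnecessary.

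\textbf{Main obstacle.} The delicate point is only the sign argument: we need $F(u_{\min})>0$ strictly and $\beta>0$. Both follow from $f>0$ and the strong maximum principle ($u<0$ in $\Omega$), which were established at the start of this section. The Hopf step is not even essential, since $c\ge 0$ on the boundary suffices.
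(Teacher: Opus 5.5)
Your Step~2 is a valid deduction from \eqref{eq:1.8} as printed, but it works only because of a sign slip in that display, so it does not prove the lemma the paper actually needs. The $P$-function the paper works with is $|\nabla u|^2+\beta F(u)$, where $F(u)=\int_u^0 f>0$. The evidence is consistent. Since $F'=-f$, this function yields \eqref{eq:3.1}, $\Phi_k=2u_{ik}u_i-\beta f u_k$, whereas the printed function would give $+\beta f u_k$. For $f=1$ it is the function $|\nabla u|^2-u$ of \eqref{eq:4.1} and $|\nabla u|^2-2u$ behind \eqref{eq:5.1}. The $\beta=2$ step of the paper's proof ($\Phi_r=0\Rightarrow u_{rr}=f$) uses the same convention.

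Your reading should have raised an alarm by itself. It gives $\min_{\overline{\Omega}}\Phi\le-\beta F(u_{\min})<0\le\min_{\partial\Omega}\Phi$, which refutes Theorem~\ref{t2}, the very result this lemma serves. With the intended sign, your Step~2 gives $c=\beta F(u_{\min})>0$. That is perfectly compatible with $c=|\nabla u|^2$ on $\partial\Omega$, so no contradiction remains.

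The paper argues through the PDE instead. For $\beta\in[1,2)$, a constant $\Phi$ makes the left side of \eqref{eq:3.4} vanish, while the right side is asserted to be nonzero. For $\beta=2$, constancy makes $|\nabla u|$ constant on $\partial\Omega$, and Serrin's theorem gives a ball and a radial solution. Then $u_{rr}=f$ in the radial equation forces $(n-1)g\,u_r/(rG)=0$, which is impossible.

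Your fallback derivative route is the more promising part, but note that it silently switches to the paper's sign convention, not the one used in your Step~2. It is also incomplete as sketched. Taking the trace of $\Phi_{kl}(\mathbf{x}_0)=0$ and using $\Delta u(\mathbf{x}_0)=f$ (from \eqref{eq:2.3}, since $G(0)=g(0)$) only gives $2|D^2u|^2=\beta f^2$. By Cauchy--Schwarz this merely requires $\beta\ge 2/n$, and it is satisfiable for every $\beta\in[1,2)$. You need the full matrix identity $2(D^2u)^2=\beta f\,D^2u$, which puts every eigenvalue of $D^2u(\mathbf{x}_0)$ in $\{0,\beta f/2\}$. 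With positive definiteness (from Theorem~\ref{t1}), this forces $\beta=2/n$.

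That settles all $\beta\in[1,2]$ except $n=2$, $\beta=1$. In that case the identity at $\mathbf{x}_0$ is genuinely consistent, namely $D^2u(\mathbf{x}_0)=\frac f2 I$. Indeed, for $f=g=1$ on a disk of radius $R$, the solution $u=(|x|^2-R^2)/4$ gives $|\nabla u|^2-u\equiv R^2/4$. So that case needs the structure of the data: a condition making the right-hand side of \eqref{eq:3.4} nonzero wherever $\nabla u\neq0$. At $\beta=1$ that right-hand side equals $(Gf'+G'f^2)|\nabla u|^2$; in the Euclidean case, for example, $f=1$ and $G'<0$. The paper's own appeal to \eqref{eq:3.4} at $\beta=1$ tacitly requires the same condition.
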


\begin{proof}
If $\beta \in [1,2)$, then no constant $\Phi(\mathbf{x};\beta)$ can satisfy \eqref{eq:3.4} because the right-hand side is positive. It remains to investigate $\beta = 2$. Assume that $\Phi(\mathbf{x};2)$ is constant on $\overline{\Omega}$. By the definition of $\Phi(\mathbf{x};2)$ and $u = 0$ on $\partial\Omega$, we deduce that $|\nabla u|$ is constant on $\partial\Omega$. By Serrin's symmetry result~\cite{Se71}, $\Omega$ must be a ball and the solution must be radial: $u = u(r)$, $r = |\mathbf{x}|$. In radial coordinates, equation \eqref{eq:1.4} becomes
\[
u_{rr} + \frac{n-1}{r}\frac{g}{G}u_{r} = f.
\]
Since $\Phi(\mathbf{x};2)$ is constant, $\partial\Phi/\partial r = \Phi_{,k}u_{,k} = 0$, so $u_{rr} = f(u)$ and \eqref{eq:3.4} becomes
\[
\frac{(n-1)g}{rG}u_{r}^{2} = 0,
\]
which is impossible since $u_{r} \neq 0$ for $r \neq 0$. This contradiction completes the proof.
\end{proof}

We now prove Theorem~\ref{t2} by contradiction. Assume that the minimum of $\Phi(\mathbf{x};\beta)$ occurs at the critical point $\mathbf{O}$ of $u$. We choose coordinates so that $\mathbf{O}$ is at the origin. Then $\mathbf{O}$ is the unique global minimum point of $u$, and by choosing appropriate rotations:
\begin{equation}\label{eq:3.7}
u_{i}(\mathbf{O}) = u_{ij}(\mathbf{O}) = 0, \qquad u_{ii}(\mathbf{O}) > 0, \qquad i,j = 1,\ldots,n, \quad i \neq j.
\end{equation}

We distinguish two cases.

\textbf{Case 1:} $\beta \in (1,2]$. Using \eqref{eq:3.7}, we evaluate \eqref{eq:3.1} and \eqref{eq:3.2} at $\mathbf{O}$:
\[
\Phi_{i}(\mathbf{O};\beta) = 0, \quad i = 1,\ldots,n,
\]
\[
\Phi_{ii}(\mathbf{O};\beta) = 2u_{ii}^{2}(\mathbf{O}) - \beta f(u_{m})u_{ii}(\mathbf{O}), \qquad \Phi_{ij}(\mathbf{O};\beta) = 0, \quad i \neq j.
\]
Since $\Phi(\mathbf{x};\beta)$ attains its minimum at $\mathbf{O}$:
\begin{equation}\label{eq:3.10}
0 \leq \Phi_{ii}(\mathbf{O};\beta) = u_{ii}(\mathbf{O})(2u_{ii}(\mathbf{O}) - \beta f(u_{m})), \quad i = 1,\ldots,n.
\end{equation}
From \eqref{eq:3.7} and \eqref{eq:3.10}:
\[
2u_{ii}(\mathbf{O}) - \beta f(u_{m}) \geq 0, \quad i = 1,\ldots,n.
\]
Summing:
\begin{equation}\label{eq:3.12}
\Delta u(\mathbf{O}) - n\beta f(u_{m}) \geq 0.
\end{equation}
Evaluating \eqref{eq:1.4} at $\mathbf{O}$: $\Delta u(\mathbf{O}) = f(u_{m})$. Inserting into \eqref{eq:3.12}: $\beta \leq 1$, contradicting $\beta > 1$.

\textbf{Case 2:} $\beta = 1$. We use a continuity argument from \cite{PS04} (see also \cite{BE13}). From Lemma~\ref{l31}, for all $\beta \in [1,2]$, $\Phi(\mathbf{x};\beta)$ takes its minimum either on $\partial\Omega$ or at the critical point of $u$. From Case~1, $\Phi(\mathbf{x};\beta)$ takes its minimum on $\partial\Omega$ for all $\beta \in (1,2)$. As $\beta$ decreases continuously from $2$ to $1$, the minimum points move continuously and cannot jump from $\partial\Omega$ to the interior critical point. This contradiction proves Theorem~\ref{t2} for $\beta = 1$.

\section{Proof of Theorem~\ref{t3}}

We distinguish the two types of equations depending on the ambient space.

\subsection{Proof of Theorem~\ref{t3}(a)}

From Theorem~\ref{t1}, $\Phi(\mathbf{x};1)$ takes its minimum value at some point $\mathbf{Q} \in \partial\Omega$. This implies
\begin{equation}\label{eq:4.1}
\left\vert \nabla u\right\vert^{2} - u \geq q_{m}^{2},
\end{equation}
where $q_{m}$ is the minimum value of $|\nabla u|$ on $\partial\Omega$. Evaluating \eqref{eq:4.1} at the unique minimum point of $u$:
\begin{equation}\label{eq:4.2}
-u_{\min} \geq q_{m}^{2}.
\end{equation}

Next, we construct a lower bound for $q_{m}$ in terms of the mean curvature $K(s)$ of $\partial\Omega$. Let $\mathbf{Q} = \mathbf{Q}_{1}$. Since $\Phi(\mathbf{x};1)$ takes its minimum at $\mathbf{Q}$, we have $\partial\Phi(\mathbf{x};1)/\partial\mathbf{n} \leq 0$ at $\mathbf{Q}$, i.e.,
\begin{equation}\label{eq:4.3}
2u_{n}u_{nn} - u_{n} \leq 0 \quad \text{at } \mathbf{Q},
\end{equation}
where $u_{n}$ and $u_{nn}$ are the first and second outward normal derivatives of $u$ on $\partial\Omega$. Since $u < 0$ in $\Omega$ and $u = 0$ on $\partial\Omega$, we have $u_{n} > 0$ and $u_{n} = |\nabla u|$ on $\partial\Omega$. Thus \eqref{eq:4.3} becomes
\begin{equation}\label{eq:4.4}
u_{nn} \leq \frac{1}{2} \quad \text{at } \mathbf{Q}.
\end{equation}

Since $\partial\Omega$ is smooth, equation \eqref{eq:1.9} in normal coordinates along $\partial\Omega$ becomes
\[
\frac{u_{nn} + (n-1)Ku_{n}}{(1+u_{n}^{2})^{1/2}} - \frac{u_{nn}u_{n}}{(1+u_{n}^{2})^{3/2}} = \frac{1}{(1+u_{n}^{2})^{3/2}} \quad \text{on } \partial\Omega,
\]
which simplifies to
\begin{equation}\label{eq:4.6}
u_{nn} + (n-1)Ku_{n}(1+u_{n}^{2}) = 1 \quad \text{on } \partial\Omega.
\end{equation}

Inserting \eqref{eq:4.6} into \eqref{eq:4.4}:
\[
1 \leq 2(n-1)K(\mathbf{Q})q_{m}(1+q_{m}^{2}).
\]
Therefore,
\[
q_{m}(1+q_{m}^{2}) \geq \frac{1}{2(n-1)K(\mathbf{Q})} \geq \frac{1}{2(n-1)K_{\max}},
\]
or equivalently,
\begin{equation}\label{eq:4.9}
q_{m}^{3} + q_{m} - \alpha \geq 0,
\end{equation}
where $\alpha = \frac{1}{2(n-1)K_{\max}}$. Solving the cubic inequality \eqref{eq:4.9}:
\begin{equation}\label{eq:4.10}
q_{m} \geq \sqrt[3]{\frac{\alpha}{2} + \sqrt{\frac{\alpha^{2}}{4} + \frac{1}{27}}} + \sqrt[3]{\frac{\alpha}{2} - \sqrt{\frac{\alpha^{2}}{4} + \frac{1}{27}}}.
\end{equation}

Inserting \eqref{eq:4.10} into \eqref{eq:4.2} yields \eqref{eq:1.10}.

\subsection{Proof of Theorem~\ref{t3}(b)}

We consider the Lorentz-Minkowski space and a solution of \eqref{eq:1.11}. Following similar steps:
\begin{equation}\label{eq:4.11}
-u_{\min} \geq q_{m}^{2},
\end{equation}
and
\[
q_{m}^{3} - q_{m} + \alpha \leq 0.
\]
Solving this cubic inequality:
\begin{equation}\label{eq:4.13}
q_{m} \geq \frac{2}{\sqrt{3}}\cos\left( \frac{1}{3}\cos^{-1}\left( \frac{-3\sqrt{3}\alpha}{2}\right) - \frac{2\pi}{3}\right).
\end{equation}

Inserting \eqref{eq:4.13} into \eqref{eq:4.11} yields \eqref{eq:1.12}.

\begin{remark}
Regarding the optimality of the bounds in Theorem~\ref{t3}: equality holds when the corresponding $P$-function is identically constant. However, Lemma~\ref{l32} shows this is impossible. Therefore, the bound estimates \eqref{eq:1.10} and \eqref{eq:1.12} are not optimal. As for the bound in Theorem~\ref{thm:upper}, equality in \eqref{eq:5.4} holds in the limit as $\Omega$ degenerates into a strip region of width $2d$.
\end{remark}

\section{An upper bound for $-u_{\min}$}

For problem \eqref{eq:1.4}--\eqref{eq:1.5}, maximum principles for appropriate $P$-functions have been obtained by Payne and Philippin in \cite{PP79}. We use them to derive upper bound estimates that complement Theorem~\ref{t3}. 

From \cite[Corollary~1]{PP79}, the function $\Phi(\mathbf{x};2)$ takes its maximum value at the (unique) critical point of $u$. This implies
\begin{equation}\label{eq:5.1}
|\nabla u|^{2} \leq 2u - 2u_{\min}.
\end{equation}

Let $\mathbf{P}$ be a point where $u = u_{\min}$ and $\mathbf{Q}$ be a point on $\partial\Omega$ nearest to $\mathbf{P}$. Let $r$ measure the distance from $\mathbf{P}$ to $\mathbf{Q}$ along the ray connecting them. Clearly,
\begin{equation}\label{eq:5.2}
\frac{du}{dr} \leq \left\vert \nabla u\right\vert.
\end{equation}

Integrating \eqref{eq:5.2} from $\mathbf{P}$ to $\mathbf{Q}$ and using \eqref{eq:5.1}:
\begin{equation}\label{eq:5.3}
\int_{u_{\min}}^{0}\frac{du}{\sqrt{u - u_{\min}}} \leq \sqrt{2}\int_{\mathbf{P}}^{\mathbf{Q}}dr = \sqrt{2}\left\vert \mathbf{PQ}\right\vert \leq \sqrt{2}\,d,
\end{equation}
where $d$ is the radius of the largest ball inscribed in $\Omega$. Evaluating the integral in \eqref{eq:5.3}:

\begin{theorem}\label{thm:upper}
Let $d$ be the radius of the largest ball inscribed in $\Omega$. If $u$ is the solution of problem \eqref{eq:1.9} or \eqref{eq:1.11}, then
\begin{equation}\label{eq:5.4}
-u_{\min} \leq \frac{d^{2}}{2}.
\end{equation}
\end{theorem}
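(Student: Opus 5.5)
The plan is to finish the computation set up in \eqref{eq:5.3}. For both \eqref{eq:1.9} and \eqref{eq:1.11} we have $f\equiv 1$, so $F(u)=-u$ and $\Phi(\mathbf{x};2)=|\nabla u|^{2}+2u$. The input I will take for granted is \cite[Corollary~1]{PP79}: $\Phi(\mathbf{x};2)$ attains its maximum at the unique critical point $\mathbf{P}$ of $u$, where $\nabla u=0$. Uniqueness of $\mathbf{P}$ comes from Theorem~\ref{t1}, since $u$ is a strictly monotone function of the strictly concave $v$. At $\mathbf{P}$ the value of $\Phi$ is $2u_{\min}$, and the maximum property gives \eqref{eq:5.1} everywhere in $\overline{\Omega}$. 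In the Lorentzian case I still need the hypotheses of \cite{PP79} to hold. There $g(s)=(1-s)^{-1/2}$ and $G(s)=(1-s)^{-3/2}$, and both are positive for $s<1$. The constraint $|\nabla u|<1$ keeps the equation uniformly elliptic on $\overline{\Omega}$, so I expect the same corollary to apply there as well.

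The integration step works as follows. Let $\mathbf{Q}\in\partial\Omega$ be a point nearest to $\mathbf{P}$, and parametrize the segment $\mathbf{PQ}$ by arclength $r\in[0,|\mathbf{PQ}|]$. Along this segment $u$ goes from $u_{\min}$ to $0$. I would check that $u$ is nondecreasing along the segment. This should follow because the sublevel sets $\{u\le c\}=\{v\ge v(c)\}$ are convex, so $u$ restricted to a segment is quasiconvex. Its restriction then has its minimum at the endpoint $\mathbf{P}$, which forces it to be nondecreasing. Hence $0\le du/dr\le|\nabla u|\le\sqrt{2}\sqrt{u-u_{\min}}$. Separating variables on the set where $u>u_{\min}$ gives
\[
\int_{u_{\min}}^{0}\frac{du}{\sqrt{u-u_{\min}}}\le\sqrt{2}\,|\mathbf{PQ}|.
\]
The singularity at the lower endpoint is integrable, so the change of variables is justified by a limiting argument starting from $u_{\min}+\varepsilon$.

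Evaluating the left side gives $2\sqrt{-u_{\min}}$. Since $\mathbf{Q}$ is nearest, the ball centered at $\mathbf{P}$ of radius $|\mathbf{PQ}|$ lies in $\Omega$, so $|\mathbf{PQ}|\le d$. Therefore $2\sqrt{-u_{\min}}\le\sqrt{2}\,d$, and squaring yields $-u_{\min}\le d^{2}/2$, which is \eqref{eq:5.4}.

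The main obstacle is making sure \cite[Corollary~1]{PP79} really delivers the maximum principle for $\Phi(\mathbf{x};2)$ in both settings. The relevant sign is $(\beta-2)$ times the bracket in \eqref{eq:3.4}, which vanishes at $\beta=2$. I would therefore recheck \eqref{eq:3.4} at $\beta=2$, keeping the gradient term that the Philippin--Safoui inequality \eqref{eq:3.3} contributes with the opposite sign, to confirm that $L\Phi\ge 0$. If that fails, my fallback is to cite \cite{PP79} directly, with the boundary behavior of $\Phi$ handled by the normal-coordinate form \eqref{eq:4.6} of the equation. Using \eqref{eq:4.6}, $\partial\Phi/\partial\mathbf{n}=2u_{n}(u_{nn}+1)$ on $\partial\Omega$, and the aim would be to exclude a boundary maximum by Hopf's lemma.
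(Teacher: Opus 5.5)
Your route is the paper's: cite \cite[Corollary~1]{PP79} at $\beta=2$ to get \eqref{eq:5.1}, then integrate along the segment from the minimum point $\mathbf{P}$ to a nearest boundary point. As written, however, the key step has a sign error. With $\Phi(\mathbf{x};2)=|\nabla u|^{2}+2u$ (which is what \eqref{eq:1.8} gives literally), $\Phi$ cannot attain its maximum at $\mathbf{P}$. There it equals $2u_{\min}<0$, whereas on $\partial\Omega$ it equals $|\nabla u|^{2}>0$. Even granting a maximum at $\mathbf{P}$, you would get $|\nabla u|^{2}\le 2(u_{\min}-u)\le 0$, not \eqref{eq:5.1}.

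The function actually used in the paper is $P=|\nabla u|^{2}-2u=|\nabla u|^2+2F(u)$; \eqref{eq:1.8} carries a sign typo. This $P$ is the one consistent with \eqref{eq:3.1}, \eqref{eq:4.1}, \eqref{eq:4.3} and with the Payne--Philippin $P$-function for this equation. At every critical point $P=-2u\le -2u_{\min}$, and the maximum property then yields \eqref{eq:5.1}.

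The same sign error breaks your fallback. Your expression $2u_n(u_{nn}+1)$ has no definite sign. For the correct $P$, \eqref{eq:4.6} gives $\partial P/\partial\mathbf{n}=2u_n(u_{nn}-1)=-2(n-1)Ku_n^{2}(1+u_n^{2})\le 0$, with $1-u_n^2$ in place of $1+u_n^2$ in the Lorentzian case. Hopf's lemma therefore rules out a boundary maximum; a constant $P$ would force $K\equiv 0$.

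Your worry about \eqref{eq:3.4} at $\beta=2$ is also unfounded. Take $a_{ij}$ from \eqref{eq:2.3}, choose coordinates with $\nabla u=|\nabla u|e_1$, and use $g/G+2sg'/G\equiv 1$. A direct computation then gives $a_{ij}P_{ij}=2\frac{g}{G}\sum_{\alpha,\beta\ge 2}u_{\alpha\beta}^{2}\ge 0$ modulo terms linear in $\nabla P$. No Philippin--Safoui inequality is needed.

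Separately, Theorem~\ref{t1} assumes $f'>0$, which fails for $f\equiv 1$. You therefore cannot invoke it for uniqueness of $\mathbf{P}$ or for monotonicity of $u$ along the segment. Neither is needed.
\begin{itemize}
\item \eqref{eq:5.1} only uses that $P=-2u\le -2u_{\min}$ at every critical point.
\item \eqref{eq:5.1} says that, along the segment, $\sqrt{u-u_{\min}}$ has derivative of absolute value at most $1/\sqrt{2}$ wherever $u>u_{\min}$.
\item Integrate from the last point of the segment where $u=u_{\min}$ up to $\mathbf{Q}$. This gives $\sqrt{-u_{\min}}\le |\mathbf{PQ}|/\sqrt{2}\le d/\sqrt{2}$, which is \eqref{eq:5.4}.
\end{itemize}
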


\section{Declarations}

\subsection{Funding}
Rafael L\'opez has been partially supported by Grant PID2023-150727NB-I00 funded by MICIU/AEI/10.13039/501100011033, and ERDF/EU, and Maria de Maeztu Unit of Excellence IMAG, reference CEX2020-001105-M, funded by MICIU/AEI/10.13039/501100011033, and ERDF/EU.

\subsection{Conflicts of interest}
The authors have no relevant financial or non-financial interests to disclose.

\end{document}